\documentclass[11pt,a4paper]{article}

\usepackage{graphicx,latexsym,euscript,makeidx,color,bm}
\usepackage{amsmath,amsfonts,amssymb,amsthm,mathtools,mathrsfs,enumerate}
\usepackage[authoryear,round,sort]{natbib}
\setcitestyle{aysep={, },yysep={; }}
\usepackage[colorlinks,linkcolor=blue,anchorcolor=green,citecolor=black]{hyperref}

\usepackage{tikz}
\usetikzlibrary{fit,calc,positioning}
\tikzstyle{Block}=[rectangle,minimum width=3cm,minimum height=1cm,text centered,text width=5.2cm,draw=black]
\tikzstyle{Implication}=[rectangle,minimum width=3cm,minimum height=1cm,text centered,text width=5.2cm]
\tikzstyle{jian}=[<->, >=stealth]

\usepackage{geometry}
\allowdisplaybreaks[4]

      \def\hA{\widehat{A}} \def\tA{\widetilde{A}}
      \def\hB{\widehat{B}} 
\def\dbC{\mathbb{C}}  \def\cC{{\cal C}}    \def\hC{\widehat{C}} \def\tC{\widetilde{C}}
      
\def\dbE{\mathbb{E}}    
\def\dbF{\mathbb{F}}  \def\cF{{\cal F}}

  \def\cK{{\cal K}}  
  \def\cL{{\cal L}}  
     
\def\dbN{\mathbb{N}}  \def\cN{{\cal N}}  
    
  \def\cP{{\cal P}}  
    
\def\dbR{\mathbb{R}}  \def\cR{{\cal R}}  
\def\dbS{\mathbb{S}}  \def\cS{{\cal S}}  
  \def\cT{{\cal T}}

  \def\cW{{\cal W}}  
  \def\cX{{\cal X}}

\def\hb{\hbox}
\def\ms{\medskip}

        \def\lan{\langle}    \def\as{\hb{a.s.}}
   \def\ran{\rangle}    \def\tr{\hb{tr$\,$}}
         \def\rank{\hb{rank\,}}
\def\no{\noindent}

     \def\({\Big (}       \def\ba{\begin{aligned}}
      \def\){\Big )}       \def\ea{\end{aligned}}
      \def\[{\Big[}        \def\bel{\begin{equation}\label}
          \def\]{\Big]}        \def\ee{\end{equation}}

  \def\G{\Gamma}

\def\bp{\begin{pmatrix}}
\def\ep{\end{pmatrix}}

                 \def\hb{\hbox}
\def\ms{\medskip}              
          \def\lan{\langle}       \def\as{\text{a.s.}}
         \def\ran{\rangle}       \def\tr{\hb{tr$\,$}}
             
\def\no{\noindent}

  \def\({\Big(}           
       \def\){\Big)}           
   \def\[{\Big[}           
     \def\]{\Big]}

\DeclareMathOperator{\diag}{diag}

\DeclareMathOperator{\im}{im}
\DeclareMathOperator{\Span}{span}

\newtheoremstyle{thry}
{}      
{}      
{\sl}   
{}      
{\bf}   
{.}     
{.5em}  
{}      

\theoremstyle{thry}

\newtheorem{theorem}{Theorem}[section]
\newtheorem{proposition}[theorem]{Proposition}
\newtheorem{corollary}[theorem]{Corollary}
\newtheorem{lemma}[theorem]{Lemma}

\theoremstyle{definition}
\newtheorem{definition}[theorem]{Definition}

\newtheorem{assumption}[theorem]{Assumption}

\theoremstyle{remark}
\newtheorem{remark}[theorem]{Remark}

\makeatletter
   
   \@addtoreset{equation}{section}
   \newcommand{\setword}[2]{%
   \phantomsection
   #1\def\@currentlabel{\unexpanded{#1}}\label{#2}%
   }
\makeatother

\begin{document}

\title{\bf Hautus criteria for exact controllability and stabilizability of discrete-time backward-structured stochastic linear systems}
\author{Xurun Zuo\thanks{Correspongding author.
Department of Mathematics, Southern University of Science and Technology, Shenzhen 518055, China (Email: {\tt 12331006@mail.sustech.edu.cn}).} }
\date{}

\maketitle

\no{\bf Abstract.}
This paper studies exact controllability and $L^2$-stabilizability of discrete-time backward-structured stochastic linear systems.
For each prescribed finite horizon, equivalent characterizations of exact controllability are established in terms of the controllability Gramian, a Kalman-type rank condition, and the reachable subspace.
Exact null controllability without a prescribed horizon is then characterized by a finite-dimensional rank condition, and a Hautus-type criterion is derived using positive semidefinite eigenmatrices of a positive operator. 
For $L^2$-stabilizability, we establish a stabilizability decomposition and obtain a corresponding Hautus-type spectral criterion.
As a byproduct, we show that exact null controllability implies $L^2$-stabilizability.
These results provide algebraic and spectral tests for the controllability and stabilizability of the considered systems.

\ms
\no{\bf Key words.}
Exact controllability, exact null controllability, $L^2$-stabilizability, backward-structured stochastic difference equation, Hautus-type criteria.


\section{Introduction}

Let $(\Omega,\mathcal{F},\mathbb P)$ be a complete probability space carrying a sequence $\{w_k\}_{k\geq0}$ of independent scalar-valued random variables such that
$$
 \mathbb{E} w_k=0,\quad \mathbb{E} w_k^2=1,\quad k\in\dbN.
$$
Set $\mathcal{F}_k=\sigma(w_0,\ldots,w_k)$, augmented by the null sets, and let $\mathcal{F}_{-1}$ be the completed trivial sigma-field.  
Let $\dbF:=\{\cF_k\}_{k\geq-1}$.
Denote by $l_{\dbF}^2(0,\infty;\dbR^m)$ the space of all $\dbF$-predictable, square-summable $\dbR^m$-valued processes on $\{0,1,2,\cdots\}$. 
Consider the controlled discrete-time linear stochastic system
\begin{align}
\label{eq:system}
    \begin{cases}
        x_{k+1}=Ax_k+Bu_k+w_k(Cx_k+Du_k),\quad k\in\dbN,\\
        x_0=y\in\dbR^n,
    \end{cases}
\end{align}
where the coefficients $A,C\in\dbR^{n\times n}$ and $B,D\in\dbR^{n\times m}$ are constant matrices.
System \eqref{eq:system} is said to be $L^2$-stabilizable if, for any $y\in\dbR^n$, there exists a control process $u\in l_{\dbF}^2(0,\infty;\dbR^m)$ such that the corresponding state $x\in l_{\dbF}^2(0,\infty;\dbR^n)$.
This definition gives the open-loop formulation of $L^2$-stabilizability. 
The corresponding closed-loop formulation is introduced in \autoref{def:stabilizable}, and the equivalence between the two formulations is established in \autoref{lem:stabilizable}.

Feedback stabilizability of stochastic systems with state- and control-dependent noise was studied early by \citet{willems1976feedback}, and \citet{morozan1983stabilization} subsequently considered the stabilizability of stochastic discrete-time control systems. 
Since then, the stability and stabilizability of discrete-time stochastic systems have been investigated from several perspectives.
\citet{hou2011general}, \citet{sun2011critical}, and
\citet{hosoe2019equivalent} studied Lyapunov inequalities and related
stability and stabilization criteria, while a positive-operator approach was developed by \citet{ungureanu2013positive}.
Infinite-horizon stochastic linear-quadratic control and the associated Riccati equations were investigated by \citet{huang2008infinite}.
For discrete-time mean-field stochastic systems, \citet{ni2015discrete} established the equivalence between open-loop and closed-loop $L^2$-stabilizability. 
More recently, \citet{sun2025infinite} developed an infinite-horizon BSDE approach to exponential stabilizability and investigated its relationship with controllability. 
The stabilizability of continuous-time stochastic systems has also been extensively studied \citep[see, e.g.,][]{willems1976feedback,zhang2004stabilizability,zhang2008generalized,sun2026hautus}. 

Exact controllability of continuous-time stochastic systems was first studied by \citet{peng1994backward}, who showed that exact controllability requires the control coefficient in the diffusion term to have full row rank.
Motivated by this result, when $\rank D=n$, system \eqref{eq:system} can be equivalently rewritten as the following backward-structured stochastic system \citep{xu2023multiplicative}:
\begin{align}
\label{eq:canonical}
    \begin{cases}
        x_{k+1}=Ax_k+Bu_k+Cz_k+w_kz_k,\quad k\in\dbN,\\
        x_0=y\in\dbR^n.
    \end{cases}
\end{align}
\citet{wang2023exact} investigated the exact controllability of forward and backward stochastic difference systems with multiplicative noise.
\citet{xu2023multiplicative} established controllability criteria in terms of the controllability Gramian matrix and Kalman-type rank conditions. 
Related extensions address exact reachability under partial information \citep{wang2025partial} and exact controllability under control constraints \citep{xu2025constraint}.

The Popov--Belevitch--Hautus (PBH) test is one of the central spectral tools in deterministic linear control.  
For the discrete-time system $x_{k+1}=Ax_k+Bu_k$, controllability is equivalent to 
$$\rank(\lambda I-A,\,B)=n,\quad\forall\lambda\in\dbC,$$
while stabilizability is equivalent to the same rank condition restricted to $|\lambda|\geq1$ \citep[see, e.g.,][]{hespanha2018linear,trentelman2001control}. 
Stochastic counterparts of the PBH test for discrete-time systems have
been studied mainly in connection with observability and detectability
\citep[see, e.g.,][]{dragan2020exact,hou2016spectral,li2009detectability}.
Recently, \citet{sun2026hautus} established Hautus-type criteria for exact controllability and stabilizability of continuous-time backward-structured stochastic systems.
To the best of our knowledge, corresponding criteria for exact controllability and $L^2$-stabilizability of discrete-time backward-structured stochastic systems have not yet been developed.
Extending the classical PBH test to the present stochastic setting is not straightforward. 
In the presence of multiplicative noise, the reachable directions are generated jointly by the drift- and noise-related matrices, rather than by powers of a single system matrix.
Consequently, the classical left-eigenvector argument cannot be applied directly.
The relevant spectral obstructions are instead positive semidefinite eigenmatrices of a positive linear operator on $\dbS^n$. 

In this paper, we develop Hautus-type criteria for exact controllability and $L^2$-stabilizability of the backward-structured stochastic system \eqref{eq:canonical}.
Compared with the continuous-time results of \citet{sun2026hautus}, the discrete-time problem considered here presents several genuinely different features.
First, the corresponding backward stochastic difference equation does not necessarily admit an adapted solution for every square-integrable terminal state, which necessitates restricting the terminal state to set $\cS_N$ defined by \eqref{set:SN}.
 Second, exact controllability may depend on the prescribed horizon when $N<n-1$, although the reachable-subspace sequence stabilizes after at most $n-1$ steps. 
 Third, the $L^2$-stabilizability criterion tests positive semidefinite eigenmatrices associated with eigenvalues $\lambda\in[0,1]$, whereas the corresponding continuous-time criterion concerns eigenvalues $\lambda\leq0$.
The main contributions of this paper are summarized as follows.

\begin{enumerate}[(i)]
    \item 
    For each prescribed horizon $N$, we first introduce exact controllability on $[0,N]$, and prove that it is equivalent to exact null controllability and exact reachability from the zero initial state to every terminal state in the admissible terminal set $\cS_N$; see \autoref{prop:controllable-equivalent}.
   We then establish equivalent characterizations of these properties in terms of the controllability Gramian $G_N$, the Kalman-type matrix $R_N$, and the reachable subspace $V_N$; see \autoref{thm:controllable-N}.
    Since the sequence of subspaces $\{V_k\}_{k\geq0}$ stabilizes in at most $n-1$ steps, exact controllability on $[0,N]$ is characterized by the horizon-independent condition $\dim V_{n-1}=n$ whenever $N\geq n-1$.
    However, such horizon independence need not hold when $N<n-1$.
    \item 
    We next remove the prescribed-horizon requirement and define exact null controllability by requiring every initial state to be steered to zero in finitely many steps. Although the number of control steps may initially depend on the initial state, we prove that this definition is equivalent to exact null controllability on $[0,N]$ for some common finite horizon $N\in\dbN$; see \autoref{prop:null-controllable}. 
    Consequently, exact null controllability admits the finite-dimensional rank criterion $\rank R_{n-1}=\dim V_{n-1}=n$; see \autoref{thm:null-controllable}.
    We further establish a controllability decomposition associated with $V_{n-1}$, and then derive a Hautus-type criterion for exact null controllability of system \eqref{eq:canonical} using the spectral properties of positive operators; see \autoref{thm:controllability decomposition} and \autoref{thm:controllable-hautus}.

    \item 
    For $L^2$-stabilizability, we first consider the special case $B=0$ and obtain equivalent characterizations in terms of a strict matrix inequality and the spectral properties of a positive operator; see \autoref{prop:stabilizable-B=0}.
    We then prove that exact null controllability implies $L^2$-stabilizability and establish a stabilizability decomposition that reduces the $L^2$-stabilizability of system \eqref{eq:canonical} to that of the lower-dimensional subsystem \eqref{eq:subsystem-2}; see \autoref{lem:controllable-stabilizable} and \autoref{thm:stabilizable-decomposition}. Combining this decomposition with the spectral characterization for the case $B=0$, we derive a Hautus-type criterion for $L^2$-stabilizability of system \eqref{eq:canonical}; see \autoref{thm:stabilizable-hautus}.
\end{enumerate}

The rest of the paper is organized as follows.
\autoref{sec:preliminaries} collects the definitions of exact controllability and $L^2$-stabilizability, together with some preliminary results.
\autoref{sec:controllability} is devoted to exact controllability of system \eqref{eq:canonical}.
\autoref{sec:stabilizability} establishes the Hautus-type characterization of $L^2$-stabilizability.
Finally, \autoref{sec:conclusion} concludes the paper.

The following notations will be used throughout this paper.
$\dbR^{n\times m}$ denotes the space of real $n\times m$ matrices equipped with the Frobenius inner product $\lan\cdot,\cdot\ran$:
$$\lan M,N\ran\triangleq\tr(M^{\top}N),\quad M,N\in\dbR^{n\times m}.$$
The norm of $M\in\dbR^{n\times m}$ induced by $\lan\cdot,\cdot\ran$ is denoted by $|M|$.
For a matrix $M\in\dbR^{n\times m}$, we denote by $M^{\top}$ its transpose; for $M\in\dbR^{n\times n}$, $\tr(M)$ denotes its trace.
Let $\dbS^n\subset\dbR^{n\times n}$ denote the subspace of symmetric matrices. 
   We write $\mathbb S_+^n$ and $\bar{\mathbb S}_{+}^n$ for the sets of positive definite and positive semidefinite matrices in $\dbS^n$, respectively.
   For $M,N\in\dbS^n$, the notation $M\geq N$ (resp., $M>N$) means that $M-N$ is positive semidefinite (resp., positive definite).
   The identity matrix in $\dbR^{n\times n}$ is denoted by $I_n$, or simply by $I$ when no confusion can arise.
    Denote by $\im A$ the range of the matrix $A$.
    For $A\in\dbS^n$, $\lambda_{\min}(A)$ and $\lambda_{\max}(A)$ denote its minimum and maximum eigenvalues, respectively.
    For a subspace $V\subset\dbR^n$, denote by $\dim V$ the dimension of $V$.
    Denote by $\cL^*$ the adjoint of operator $\cL$ and by $r(\cL)$ the spectral radius of $\cL$.
   Denote by $\dbN$ the set $\{0,1,2,\cdots\}$ and $\dbN_+$ the set $\{1,2,\cdots\}$.
   For a random vector $\xi$, we write $\xi\in\cF_k$ to mean that $\xi$ is $\cF_k$-measurable.
   For a stochastic process $x$, we write $x\in\dbF$ if $x$ is predictable with respect to $\dbF$.
   For any $N\in\dbN$, denote $[0,N]=\{0,1,\cdots,N\}$.
   Finally, we introduce three spaces:
   \begin{align*}
       &L_{\cF_k}^2(\Omega;\dbR^n)\triangleq\{\xi:\Omega\to\dbR^n\ |\ \xi\in\cF_k\text{ and }\dbE|\xi|^2<\infty\},\\
       &l_{\dbF}^2(0,N;\dbR^n)\triangleq\left\{u:[0,N]\times\Omega\to\dbR^n\ |\ u\in\dbF\text{ and }\sum_{k=0}^N\dbE|u_k|^2<\infty\right\},\\
       &l_{\dbF}^2(0,\infty;\dbR^n)\triangleq\left\{u:\dbN\times\Omega\to\dbR^n\ |\ u\in\dbF\text{ and }\sum_{k=0}^{\infty}\dbE|u_k|^2<\infty\right\}.
   \end{align*}

\section{Preliminaries}
\label{sec:preliminaries}

In this section, we review the notions of controllability and stabilizability, and establish several preliminary results that will be used in the subsequent analysis.
   
We first introduce the notions of exact controllability and exact null controllability of backward-structured system \eqref{eq:canonical}.

\begin{definition}
\label{def:null-controllable-N}
    System \eqref{eq:canonical} is {\it exactly null controllable on $[0,N]$} if, for every $y\in\dbR^n$, there exist $(u,z)\in l_{\dbF}^2(0,N;\dbR^m)\times l_{\dbF}^2(0,N;\dbR^n)$ such that $x_{N+1}=0\ \as$
\end{definition}

\begin{remark}
\label{rk:controllable}
    The definition of exact null controllability for the above discrete-time system is analogous to that in continuous-time setting \citep{sun2026hautus}.
    However, exact controllability exhibits an essential difference between the discrete-time and continuous-time cases.
    This difference arises because the following terminal-value stochastic difference equation
    \begin{align}
    \label{eq:backward-example}
        \begin{cases}
            x_{k+1}=Ax_k+Bu_k+Cz_k+w_kz_k,\\
            x_{N+1}=\xi,
        \end{cases}
    \end{align}
    may not admit an adapted solution $(x,z)$ for every terminal state $\xi\in L_{\cF_N}^2(\Omega;\dbR^n)$, even when $u$ is allowed to vary over $l_{\dbF}^2(0,N;\dbR^m)$.
    For example, consider the special case in which $\{w_k\}_{k\geq0}$ is a sequence of standard normal random variables.
    Let $\xi=w_N^2e\in L_{\cF_N}^2(\Omega;\dbR^n)$,
    where $0\neq e\in\dbR^n$.
    Suppose that equation \eqref{eq:backward-example} admits a solution $(x,z)\in l_{\dbF}^2(0,N+1;\dbR^n)\times l_{\dbF}^2(0,N;\dbR^n)$.
    Then 
    \begin{align}
    \label{eq:remark}
        w_N^2e=Ax_N+Bu_N+Cz_N+w_Nz_N.
    \end{align}
    Taking conditional expectations in \eqref{eq:remark} with respect to $\cF_{N-1}$, we obtain
    \begin{align}
    \label{eq:remark-1}
        e=Ax_N+Bu_N+Cz_N.
    \end{align}
    Combining \eqref{eq:remark} and \eqref{eq:remark-1}, we have $(w_N^2-1)e=w_Nz_N$, which implies that $(w_N^3-w_N)e=w_N^2z_N$.
    Taking conditional expectations with respect to $\cF_{N-1}$ again, we have 
    \begin{align*}
        z_N=\dbE(w_N^2z_N|\cF_{N-1})=\dbE(w_N^3-w_N|\cF_{N-1})e=0.
    \end{align*}
    Thus, $(w_N^2-1)e=0$, which is impossible.
    This example shows that, unlike in the continuous-time setting, the terminal state space for exact controllability cannot in general be taken as the whole space $L_{\cF_N}^2(\Omega;\dbR^n)$; it must be restricted to those terminal states for which (2.1) admits an adapted solution.
\end{remark}

According to \autoref{rk:controllable}, we need to define an appropriate terminal state set such that system \eqref{eq:backward-example} is solvable.  
Inspired by \citet{xu2023multiplicative}, we consider the uncontrolled backward stochastic linear system
\begin{align}
\label{eq:BSDE-uncontrolled}
    \begin{cases}
        x_{k+1}=Ax_k+Cz_k+w_kz_k,\quad k\in[0,N],\\
        x_{N+1}=\xi.
    \end{cases}
\end{align}
Define the terminal-state set $\cS_N$ by
\begin{align}
\label{set:SN}
\begin{split}
    \cS_N:=\{\xi\in L_{\cF_N}^2(\Omega;\dbR^n)\ |\ &\text{system \eqref{eq:BSDE-uncontrolled} admits a solution}\\
     &(x,z)\in l_{\dbF}^2(0,N+1;\dbR^n)\times l_{\dbF}^2(0,N;\dbR^n)\}.
\end{split}
\end{align}
Clearly, for every $\xi\in\cS_N$, system \eqref{eq:backward-example} is solvable by taking $u=0$.
The following result provides an explicit representation of the elements of $\cS_N$ and shows that $\cS_N$ is a non-trivial subspace of $L_{\cF_N}^2(\Omega;\dbR^n)$.

\begin{lemma}
\label{lem:SN}
    For any $N\in\dbN$, we have
    \begin{align*}
        \cS_N=\{A^{N+1}y+\sum_{k=0}^NA^{N-k}(C+w_kI)z_k:(y,z)\in\dbR^n\times l_{\dbF}^2(0,N;\dbR^n)\}.
    \end{align*}
    Consequently, $\cS_N$ is a linear subspace of $L_{\cF_N}^2(\Omega;\dbR^n)$.
\end{lemma}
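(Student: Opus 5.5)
The plan is to prove the set equality by two inclusions, using only the recursion in \eqref{eq:BSDE-uncontrolled}, and then read off linearity from the parametrization.

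For the inclusion ``$\subseteq$'', take $\xi\in\cS_N$ with a solution $(x,z)\in l_{\dbF}^2(0,N+1;\dbR^n)\times l_{\dbF}^2(0,N;\dbR^n)$. Since $x$ is $\dbF$-predictable, $x_0$ is $\cF_{-1}$-measurable. Because $\cF_{-1}$ is the completed trivial $\sigma$-field, $x_0$ equals a deterministic vector $y\in\dbR^n$ almost surely. Iterating $x_{k+1}=Ax_k+(C+w_kI)z_k$ from $k=0$ to $k=N$ gives, by a straightforward induction,
\begin{align*}
x_{N+1}=A^{N+1}y+\sum_{k=0}^NA^{N-k}(C+w_kI)z_k .
\end{align*}
Hence $\xi=x_{N+1}$ lies in the right-hand side.

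For the inclusion ``$\supseteq$'', take any $(y,z)\in\dbR^n\times l_{\dbF}^2(0,N;\dbR^n)$ and define $x$ forward by $x_0=y$ and $x_{k+1}=Ax_k+(C+w_kI)z_k$. The points to check are the following.
\begin{enumerate}[(i)]
\item Predictability and measurability: by induction, $x_k$ is $\cF_{k-1}$-measurable, since $z_k\in\cF_{k-1}$ and $w_k\in\cF_k$. In particular $x_{N+1}\in\cF_N$.
\item Square integrability: $\dbE|w_kz_k|^2=\dbE\big[|z_k|^2\,\dbE(w_k^2\mid\cF_{k-1})\big]=\dbE|z_k|^2$, using that $w_k$ is independent of $\cF_{k-1}$ and $\dbE w_k^2=1$. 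An induction then gives $x\in l_{\dbF}^2(0,N+1;\dbR^n)$.
\end{enumerate}
Setting $\xi:=x_{N+1}$, the pair $(x,z)$ solves \eqref{eq:BSDE-uncontrolled}, and by the same formula as above $\xi$ equals the displayed expression. So every element of the right-hand side belongs to $\cS_N$.

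For linearity, the map $(y,z)\mapsto A^{N+1}y+\sum_{k=0}^NA^{N-k}(C+w_kI)z_k$ is linear from $\dbR^n\times l_{\dbF}^2(0,N;\dbR^n)$ into $L_{\cF_N}^2(\Omega;\dbR^n)$. Its image $\cS_N$ is therefore a linear subspace.

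The main (mild) obstacle is justifying that $x_0$ is deterministic. This follows from predictability together with $\cF_{-1}$ being trivial. Without that observation one would only get $x_0$ random, and the representation with $y\in\dbR^n$ would fail.
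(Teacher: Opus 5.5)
Your proposal is correct and follows essentially the same route as the paper: you prove $\subseteq$ by unrolling the recursion $x_{k+1}=Ax_k+(C+w_kI)z_k$, and $\supseteq$ by defining $x$ forward from $x_0=y$. You also make explicit several points the paper leaves implicit: that $x_0$ is a.s.\ deterministic because it is $\cF_{-1}$-measurable with $\cF_{-1}$ trivial, and that the forward-defined $x$ is predictable and square integrable, so the parametrizing map lands in $L_{\cF_N}^2(\Omega;\dbR^n)$ and its image is a linear subspace.
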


\begin{proof}
    Suppose that $\xi\in\cS_N$.
    Then there exists $(x,z)\in l_{\dbF}^2(0,N+1;\dbR^n)\times l_{\dbF}^2(0,N;\dbR^n)$ such that \eqref{eq:BSDE-uncontrolled} holds.
    Hence, we have
    \begin{align*}
        \xi&=x_{N+1}=Ax_N+(C+w_NI)z_N\\
        &=A[Ax_{N-1}+(C+w_{N-1}I)z_{N-1}]+(C+w_NI)z_N\\
        &=A^2x_{N-1}+A(C+w_{N-1}I)z_{N-1}+(C+w_NI)z_N\\
        &=\cdots\\
        &=A^{N+1}x_0+\sum_{k=0}^{N}A^k(C+w_{N-k}I)z_{N-k}\\
        &=A^{N+1}x_0+\sum_{k=0}^NA^{N-k}(C+w_kI)z_k.
    \end{align*}

    Conversely, fix any $(y,z)\in\dbR^n\times l_{\dbF}^2(0,N;\dbR^n)$, let $\xi=A^{N+1}y+\sum_{k=0}^NA^{N-k}(C+w_kI)z_k$.
    Define the process $x=(x_k)_{k\in[0,N+1]}$ recursively by
    \begin{align*}
        \begin{cases}
            x_0=y,\\
            x_{k+1}=Ax_k+(C+w_kI)z_k,\quad k\in[0,N].
        \end{cases}
    \end{align*}
    Then $x\in l_{\dbF}^2(0,N+1;\dbR^n)$ and $(x,z)$ satisfies equation \eqref{eq:BSDE-uncontrolled}.
    The proof is done.
\end{proof}

Based on the set $\cS_N$, we can define the exact controllability of system \eqref{eq:canonical}.

\begin{definition}
    System \eqref{eq:canonical} is {\it exactly controllable on $[0,N]$} if, for every $(y,\xi)\in\dbR^n\times\cS_N$, there exists $(u,z)\in l_{\dbF}^2(0,N;\dbR^m)\times l_{\dbF}^2(0,N;\dbR^n)$ such that $x_{N+1}=\xi\ \as$
\end{definition}

The following result establishes the equivalence between exact controllability and exact null controllability of system \eqref{eq:canonical} on $[0,N]$. 
Moreover, both are equivalent to the reachability of every element of $\cS_N$ from the zero initial state.

\begin{proposition}
\label{prop:controllable-equivalent}
    The following statements are equivalent.
    \begin{enumerate}[(i)]
        \item System \eqref{eq:canonical} is exactly controllable on $[0,N]$.
        \item System \eqref{eq:canonical} is exactly null controllable on $[0,N]$.
        \item For any $\xi\in\cS_N$, there exists $(u,z)\in l_{\dbF}^2(0,N;\dbR^m)\times l_{\dbF}^2(0,N;\dbR^n)$ such that
        \begin{align*}
            \begin{cases}
                x_{k+1}=Ax_k+Bu_k+Cz_k+w_kz_k,\quad k\in[0,N],\\
                x_0=0,\quad x_{N+1}=\xi.
            \end{cases}
        \end{align*}
    \end{enumerate}
\end{proposition}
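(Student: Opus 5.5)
We prove the cycle (i)$\Rightarrow$(ii)$\Rightarrow$(iii)$\Rightarrow$(i). Everything rests on linearity of the state equation in $(y,u,z)$ and on the explicit description of $\cS_N$ in \autoref{lem:SN}. For $(y,u,z)$ given, the solution of \eqref{eq:canonical} satisfies
\begin{align*}
x_{N+1}=A^{N+1}y+\sum_{k=0}^NA^{N-k}\big(Bu_k+(C+w_kI)z_k\big).
\end{align*}
So the terminal state decomposes as the sum of the free response $A^{N+1}y$ and the response driven by $(u,z)$ from zero initial state. This superposition is the only structural fact we need.

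\textbf{(i)$\Rightarrow$(ii).} Note that $0\in\cS_N$, since $\cS_N$ is a linear subspace by \autoref{lem:SN}. Taking $\xi=0$ in the definition of exact controllability gives exact null controllability immediately.

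\textbf{(ii)$\Rightarrow$(iii).} Fix $\xi\in\cS_N$. By \autoref{lem:SN}, write
\begin{align*}
\xi=A^{N+1}\bar y+\sum_{k=0}^NA^{N-k}(C+w_kI)\bar z_k
\end{align*}
for some $(\bar y,\bar z)\in\dbR^n\times l_{\dbF}^2(0,N;\dbR^n)$. By (ii), there is a pair $(\hat u,\hat z)$ steering the initial state $-\bar y$ to $0$ at time $N+1$. Set $u=\hat u$ and $z=\hat z+\bar z$. By superposition, the state starting from $x_0=0$ has terminal value
\begin{align*}
x_{N+1}=\Big[-A^{N+1}\bar y+\sum_{k}A^{N-k}\big(B\hat u_k+(C+w_kI)\hat z_k\big)\Big]+A^{N+1}\bar y+\sum_kA^{N-k}(C+w_kI)\bar z_k.
\end{align*}
The bracket equals $0$ by the choice of $(\hat u,\hat z)$, and the remaining two terms sum to $\xi$. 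Hence $x_{N+1}=\xi$.

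\textbf{(iii)$\Rightarrow$(i).} Given $(y,\xi)\in\dbR^n\times\cS_N$, set $\eta=\xi-A^{N+1}y$. Taking $z=0$ and $y$ as the initial point in \autoref{lem:SN} shows $A^{N+1}y\in\cS_N$, and since $\cS_N$ is a linear space, $\eta\in\cS_N$. Apply (iii) to $\eta$ to obtain $(u,z)$ steering $0$ to $\eta$. The same $(u,z)$ applied from the initial state $y$ then yields
\begin{align*}
x_{N+1}=A^{N+1}y+\eta=\xi.
\end{align*}

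There is no real obstacle in this argument. The only points needing care are checking that the constructed controls remain in the admissible spaces, which holds because sums of predictable square-summable processes are again such, and that $A^{N+1}y\in\cS_N$, which follows directly from the representation in \autoref{lem:SN}.
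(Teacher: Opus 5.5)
Your proof is correct and essentially the paper's. The paper proves (i)$\Leftrightarrow$(ii) and (i)$\Leftrightarrow$(iii) separately rather than as a cycle, but your (ii)$\Rightarrow$(iii) step is its (ii)$\Rightarrow$(i) superposition argument specialized to $y=0$ (using the representation in \autoref{lem:SN} instead of the uncontrolled backward solution), and your (iii)$\Rightarrow$(i) step is the same as the paper's.
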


\begin{proof}
    (i) $\implies$ (ii).
    This follows directly from $0\in\cS_N$.

    (ii) $\implies$ (i).
    Fix $(y,\xi)\in\dbR^n\times\cS_N$.
    By the definition of $\cS_N$, there exists $(\bar x,\bar z)\in l_{\dbF}^2(0,N+1;\dbR^n)\times l_{\dbF}^2(0,N;\dbR^n)$ such that 
    \begin{align*}
    \begin{cases}
        \bar x_{k+1}=A\bar x_k+C\bar z_k+w_k\bar z_k,\quad k\in[0,N],\\
        \bar x_{N+1}=\xi.
    \end{cases}
    \end{align*}
    On the other hand, by assumption, there exists $(\hat u,\hat z)\in l_{\dbF}^2(0,N;\dbR^m)\times l_{\dbF}^2(0,N;\dbR^n)$ such that 
    \begin{align*}
        \begin{cases}
            \hat x_{k+1}=A\hat x_k+B\hat u_k+C\hat z_k+w_k\hat z_k,\quad k\in[0,N],\\
            \hat x_0=y-\bar x_0,\quad\hat x_{N+1}=0.
        \end{cases}
    \end{align*}
    Let $x_k=\bar x_k+\hat x_k$, $u_k=\hat u_k$, $z_k=\bar z_k+\hat z_k$.
    Then 
    \begin{align}
    \label{eq:prop-equivalence-controllable}
        \begin{cases}
            x_{k+1}=Ax_k+Bu_k+Cz_k+w_kz_k,\quad k\in[0,N],\\
            x_0=y,\quad x_{N+1}=\xi.
        \end{cases}
    \end{align}
    Hence, system \eqref{eq:canonical} is exactly controllable on $[0,N]$.

    (i) $\implies$ (iii). 
    This follows by taking $y=0$ in (i).

    (iii) $\implies$ (i).
    Fix $(y,\xi)\in\dbR^n\times\cS_N$.
    By \autoref{lem:SN}, $A^{N+1}y\in\cS_N$ and then $\xi-A^{N+1}y\in\cS_N$.
    By assumption, there exist $\bar x\in l_{\dbF}^2(0,N+1;\dbR^n)$ and  $(u,z)\in l_{\dbF}^2(0,N;\dbR^m)\times l_{\dbF}^2(0,N;\dbR^n)$ such that
    \begin{align*}
        \begin{cases}
            \bar x_{k+1}=A\bar x_k+Bu_k+Cz_k+w_kz_k,\quad k\in[0,N],\\
            \bar x_0=0,\quad \bar x_{N+1}=\xi-A^{N+1}y.
        \end{cases}
    \end{align*}
    Let $x_k=\bar x_k+A^{k}y$ for any $k\in[0,N+1]$.
    Then $(x,u,z)$ satisfies system \eqref{eq:prop-equivalence-controllable}.
    The proof is done.
\end{proof}

The invertibility of $A$ is necessary for the unique solvability of the terminal-value stochastic system \eqref{eq:backward-example}.
Indeed, suppose that $A$ is not invertible.
Set $\xi=0$ and $u=0$. 
Define $x\in l_{\cF}^2(0,N+1;\dbR^n)$
by $x_0=v$ and $x_k=0$ for $k\in[1,N+1]$.
Let $z=0$.
Then $(x,z)$
satisfies \eqref{eq:backward-example} for any $v\in\ker A$, which implies that \eqref{eq:backward-example} is not uniquely solvable.
This motivates the following assumption.
\begin{assumption}
\label{ass:Ainvertible}
The matrix $A\in\mathbb{R}^{n\times n}$ is invertible.
\end{assumption}

\begin{remark}
    Consider the continuous controlled system 
    \begin{align*}
        dX_t=(AX_t+Bu_t+Cz_t)dt+z_tdW_t,
    \end{align*}
    where $\{W_t\}_{t\geq0}$ is a one-dimensional standard Brownian motion.
    Applying the Euler-Maruyama scheme with a uniform step size $h\  (h>0)$ gives
    \begin{align*}
        X_{t_{k+1}}=(I+hA)X_{t_k}+hBu_{t_k}+hCz_{t_k}+\sqrt{h}w_kz_{t_k},
    \end{align*}
    where $w_k=\frac{W_{t_{k+1}}-W_{t_k}}{\sqrt{h}}$ is a standard normal random variable.
    For sufficiently small $h$, the matrix $I+hA$ is invertible. Therefore, \autoref{ass:Ainvertible} is naturally satisfied by discrete-time models arising from sufficiently fine time discretizations of continuous-time stochastic systems.
\end{remark}

Before introducing the notion of stabilizability, we recall the concept of stability.
Although our main focus is on discrete-time backward-structured stochastic systems, we consider the following general linear system in order to develop the results in a more general framework.
Consider the system
\begin{align}
\label{eq:system-uncontrolled}
    \begin{cases}
        x_{k+1}=Ax_k+Cw_kx_k,\quad k\in\dbN,\\
        x_0=y.
    \end{cases}
\end{align}

\begin{definition}
\label{def:stable}
    \begin{enumerate}[(i)]
        \item System \eqref{eq:system-uncontrolled} is said to be {\it $L^2$-stable} if 
        \begin{align*}
            \sum_{k=0}^{\infty}\dbE|x_k|^2<\infty,\quad\forall y\in\dbR^n.
        \end{align*}
        \item System \eqref{eq:system-uncontrolled} is said to be {\it $L^2$-asymptotically stable} if 
    \begin{align*}
        \lim_{k\to\infty}\dbE|x_k|^2=0,\quad\forall y\in\dbR^n.
    \end{align*}
    \item System \eqref{eq:system-uncontrolled} is said to be {\it $L^2$-exponentially stable} if there exists constant $c>0$ and $\rho\in(0,1)$ such that 
    \begin{align*}
        \dbE|x_k|^2\leq c\rho^k|y|^2,\quad\forall k\in\dbN,\quad\forall y\in\dbR^n.
    \end{align*}
    \end{enumerate}
\end{definition}

The following lemma establishes the equivalence among the three notions of stability in \autoref{def:stable} and provides several useful equivalent characterizations. 
Although these results have been discussed in the literature \citep[see, e.g.,][]{ni2015discrete,hosoe2019equivalent}, we include a brief proof for completeness.

\begin{lemma}
\label{lem:stable}
    The following statements are equivalent.
    \begin{enumerate}[(i)]
        \item System \eqref{eq:system-uncontrolled} is $L^2$-stable.
        \item System \eqref{eq:system-uncontrolled} is $L^2$-asymptotically stable.
        \item System \eqref{eq:system-uncontrolled} is $L^2$-exponentially stable.
        \item There exists $P\in\dbS_+^n$ such that 
        \begin{align*}
            P-A^{\top}PA-C^{\top}PC>0.
        \end{align*}
        \item For any $Q\in\dbS_+^n$, the algebraic equation
        \begin{align*}
            P-A^{\top}PA-C^{\top}PC=Q
        \end{align*}
        admits a unique solution $P\in\dbS_+^n$.
        \item $r(\cT)<1$, where $\cT:\dbS^n\to\dbS^n$ is defined by 
        \begin{align*}
            \cT(P)=A^{\top}PA+C^{\top}PC,\quad\forall P\in\dbS^n.
        \end{align*}
    \end{enumerate}
\end{lemma}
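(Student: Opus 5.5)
The argument rests on the second-moment matrix. Set $X_k:=\dbE[x_kx_k^{\top}]$. Since $x_k\in\cF_{k-1}$ and $w_k$ is independent of $\cF_{k-1}$ with $\dbE w_k=0$ and $\dbE w_k^2=1$, the cross terms vanish and
\begin{align*}
X_{k+1}=AX_kA^{\top}+CX_kC^{\top}=\cT^*(X_k),\qquad X_0=yy^{\top},
\end{align*}
where $\cT^*(X)=AXA^{\top}+CXC^{\top}$ is the adjoint of $\cT$ under the Frobenius inner product. Hence $X_k=(\cT^*)^k(yy^{\top})$ and $\dbE|x_k|^2=\tr X_k=\lan I,(\cT^*)^k(yy^{\top})\ran=\lan \cT^k(I),yy^{\top}\ran$. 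I will prove the cycle (iii)$\Rightarrow$(i)$\Rightarrow$(ii)$\Rightarrow$(vi)$\Rightarrow$(iii), and then attach (iv) and (v) to (vi).

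The implications (iii)$\Rightarrow$(i)$\Rightarrow$(ii) are immediate, since an exponentially decaying sequence is summable and a summable sequence tends to zero.

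For (ii)$\Rightarrow$(vi), note that every $X\in\dbS^n$ is a linear combination of rank-one matrices $yy^{\top}$. By (ii), $\tr (\cT^*)^k(yy^{\top})\to0$ for every $y$. Because $(\cT^*)^k(yy^{\top})\geq0$, trace convergence to zero forces $(\cT^*)^k(yy^{\top})\to0$ in norm. By linearity $(\cT^*)^k(X)\to0$ for all $X\in\dbS^n$. In finite dimensions this gives $(\cT^*)^k\to0$, hence $r(\cT)=r(\cT^*)<1$.

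For (vi)$\Rightarrow$(iii), Gelfand's formula gives $\|\cT^k\|\leq c\rho^k$ for some $\rho\in(r(\cT),1)$. Then $\dbE|x_k|^2=\lan\cT^k(I),yy^{\top}\ran\leq c'\rho^k|y|^2$.

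For (vi)$\Rightarrow$(v), fix $Q\in\dbS^n_+$. Since $r(\cT)<1$, the Neumann series $P=\sum_{j\geq0}\cT^j(Q)$ converges and solves $(\mathcal I-\cT)P=Q$. This solution is unique because $1\notin\sigma(\cT)$. Moreover $P\geq Q>0$, since $\cT$ maps $\bar\dbS^n_+$ into itself. The implication (v)$\Rightarrow$(iv) follows by taking $Q=I$.

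For (iv)$\Rightarrow$(vi), which I expect to be the main obstacle, I will use the positivity of $\cT$ rather than any normality. From $P-\cT(P)>0$ and $P>0$ there is $\varepsilon\in(0,1)$ with $\cT(P)\leq(1-\varepsilon)P$. Since $\cT$ preserves the order on $\dbS^n$, iterating gives $0\leq\cT^k(P)\leq(1-\varepsilon)^kP$. Now any $X\in\dbS^n$ satisfies $-\alpha P\leq X\leq\alpha P$ with $\alpha=|X|/\lambda_{\min}(P)$. Positivity then yields $-\alpha(1-\varepsilon)^kP\leq\cT^k(X)\leq\alpha(1-\varepsilon)^kP$, so $\|\cT^k\|\leq c(1-\varepsilon)^k$ and $r(\cT)\leq1-\varepsilon<1$. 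Alternatively, I could argue directly along trajectories: $\dbE[x_{k+1}^{\top}Px_{k+1}]=\dbE[x_k^{\top}\cT(P)x_k]\leq(1-\varepsilon)\dbE[x_k^{\top}Px_k]$, which gives (iii) at once. The key point is that the order-preserving property of $\cT$, rather than any symmetry of the operator, is what controls the spectral radius.
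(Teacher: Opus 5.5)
Your proof is correct and follows essentially the paper's route: the same second-moment identity $\dbE|x_k|^2=y^{\top}\cT^k(I)y$ (you derive it in the dual form $X_{k+1}=\cT^*(X_k)$), the same $\cT^k\to0$ argument for (ii)$\Rightarrow$(vi), Gelfand's formula for (vi)$\Rightarrow$(iii), and the Neumann series for (vi)$\Rightarrow$(v)$\Rightarrow$(iv). The only real difference is how you close the loop from (iv): you use $\cT(P)\leq(1-\varepsilon)P$ and order preservation to get $r(\cT)\leq1-\varepsilon$ (or geometric decay of $\dbE[x_k^{\top}Px_k]$, i.e.\ (iii) directly), whereas the paper telescopes $\dbE(x_{k+1}^{\top}Px_{k+1})-\dbE(x_k^{\top}Px_k)=-\dbE(x_k^{\top}Qx_k)$ to get summability (i); both work, and yours additionally gives an explicit decay rate.
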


\begin{proof}
    Note that for any $P\in\bar\dbS_+^n$ and $k\in\dbN$, 
    \begin{align}
    \label{eq:one-step-second-moment}
        \dbE(x_{k+1}^{\top}Px_{k+1}|\cF_{k-1})=x_k^{\top}(A^{\top}PA+C^{\top}PC)x_k=x_k^{\top}\cT(P)x_k.
    \end{align}
    Iterating this identity gives 
    \begin{align}
    \label{eq:second-moment}
        \dbE(x^{\top}_kPx_k)=y^{\top}\cT^k(P)y,\quad\forall P\in\bar\dbS_+^n,\quad\forall k\in\dbN.
    \end{align} 

    (ii) $\implies$ (vi).
    By \eqref{eq:second-moment} and (ii), we have $\lim_{k\to\infty}y^{\top}\cT^k(I)y=0$ for any $y\in\dbR^n$, which implies that $\lim_{k\to\infty}\cT^k(I)=0$.
    For any $H\in\dbS^n$, choose $\alpha>0$ such that $-\alpha I\leq H\leq\alpha I$.
    Then $-\alpha\cT^k(I)\leq\cT^k(H)\leq\alpha\cT^k(I)$.
    Therefore, $\lim_{k\to\infty}\cT^k(H)=0$ for every $H\in\dbS^n$.
    Since $\cT$ is a finite-dimensional operator, this implies that $\lim_{k\to\infty}\|\cT^k\|=0$.
    Hence, every eigenvalue $\lambda$ of the complexification of $\cT$ satisfies $|\lambda|<1$, and consequently $r(\cT)<1$.

    (vi) $\implies$ (iii).
    Choose $\rho\in(0,1)$ such that $r(\cT)<\rho<1$.
    Then, there exists a constant $c>0$ such that 
    \begin{align*}
        \dbE|x_k|^2=y^{\top}\cT^k(I)y\leq c\rho^k|y|^2,\quad\forall k\in\dbN.
    \end{align*}
    
    (iii) $\implies$ (i) $\implies$ (ii) is obvious.

    (vi) $\implies$ (v).
    Since $r(\cT)<1$, $P:=\sum_{j=0}^{\infty}\cT^j(Q)$ is well-defined for any $Q\in\dbS_+^n$.
    Furthermore, $P\geq Q>0$ and $\cT(P)=\sum_{j=1}^{\infty}\cT^j(Q)$.
    Therefore, $P-\cT(P)=Q$.
    The uniqueness of $P$ follows from the invertibility of $I-\cT$.

    (v) $\implies$ (iv).
    It follows immediately by taking $Q=I$.

    (iv) $\implies$ (i).
    By (iv), set $Q:=P-\cT(P)>0$ with $P>0$.
    By \eqref{eq:one-step-second-moment}, we obtain
    \begin{align*}
        \dbE(x_{k+1}^{\top}Px_{k+1})-\dbE(x_k^{\top}Px_k)=-\dbE(x_k^{\top}Qx_k)\leq-\lambda_{\min}(Q)\dbE|x_k|^2,\quad\forall k\in\dbN.
    \end{align*}
    Summing the above inequality from $k=0$ to $k=N$ gives
    \begin{align*}
        \lambda_{\min}(Q)\sum_{k=0}^N\dbE|x_k|^2\leq y^{\top}Py-\dbE(x_{N+1}^{\top}Px_{N+1})\leq\lambda_{\max}(P)|y|^2.
    \end{align*}
    Letting $N\to\infty$, we obtain (i).
    The proof is done.
\end{proof}

\begin{definition}
\label{def:stabilizable}
    System \eqref{eq:system} is {\it $L^2$-stabilizable} if there exists $\Theta\in\dbR^{m\times n}$ such that system 
    \begin{align}
    \label{eq:closed-loop}
        \begin{cases}
            x_{k+1}=(A+B\Theta)x_k+(C+D\Theta)w_kx_k,\\
            x_0=y,
        \end{cases}
    \end{align}
    is $L^2$-stable.
    Any such matrix $\Theta$ is called a {\it stabilizer} of system \eqref{eq:system}.
\end{definition}

We end this section by presenting several equivalent characterizations of $L^2$-stabilizability.

\begin{lemma}
\label{lem:stabilizable}
    The following statements are equivalent.
    \begin{enumerate}[(i)]
        \item System \eqref{eq:system} is $L^2$-stabilizable.
        \item For any $y\in\dbR^n$, there exists $u\in l_{\dbF}^2(0,\infty;\dbR^m)$ such that the corresponding state process $x\in l_{\dbF}^2(0,\infty;\dbR^n)$.
        \item The algebraic Riccati equation 
        \begin{align}
        \label{eq:algebraic-Riccati}
            \begin{split}
                P=&I+A^{\top}PA+C^{\top}PC\\
            &\quad-(A^{\top}PB+C^{\top}PD)(I+B^{\top}PB+D^{\top}PD)^{-1}(B^{\top}PA+D^{\top}PC)
            \end{split}
        \end{align}
        admits a solution $P\in\dbS_+^n$.
        In this case, 
        \begin{align}
        \label{eq:stabilizer}
            \Theta=-(I+B^{\top}PB+D^{\top}PD)^{-1}(B^{\top}PA+D^{\top}PC)
        \end{align}
        is a stabilizer of system \eqref{eq:system}.
    \end{enumerate}
\end{lemma}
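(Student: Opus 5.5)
The plan is the cycle (i) $\Rightarrow$ (ii) $\Rightarrow$ (iii) $\Rightarrow$ (i), working throughout with the general system \eqref{eq:system}. The implication (i) $\Rightarrow$ (ii) is immediate. Given a stabilizer $\Theta$, take the feedback $u_k=\Theta x_k$, where $x$ solves the closed-loop system \eqref{eq:closed-loop}. Since $x_k\in\cF_{k-1}$, the control $u$ is predictable. Lemma~\ref{lem:stable} gives $x\in l_{\dbF}^2(0,\infty;\dbR^n)$, and then $\sum_k\dbE|u_k|^2\le|\Theta|^2\sum_k\dbE|x_k|^2<\infty$.

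For (ii) $\Rightarrow$ (iii) I would use the infinite-horizon LQ value function with cost
\begin{align*}
J(y;u)=\sum_{k=0}^\infty\dbE\big(|x_k|^2+|u_k|^2\big).
\end{align*}
By (ii), $V(y):=\inf_u J(y;u)$ is finite for every $y$. Linearity of the state in $(y,u)$ shows that $V$ is a quadratic form, $V(y)=y^\top Py$. We have $P\ge I>0$ because $J\ge|x_0|^2=|y|^2$.

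To show that $P$ solves \eqref{eq:algebraic-Riccati}, I would consider the finite-horizon problems $J_N$ with zero terminal cost. Their Riccati recursion
\begin{align*}
P_{N+1}=I+\cT(P_N)-(A^\top P_NB+C^\top P_ND)(I+B^\top P_NB+D^\top P_ND)^{-1}(B^\top P_NA+D^\top P_NC),\qquad P_0=0,
\end{align*}
gives $V_N(y)=y^\top P_Ny$, obtained by completing the square with the conditional identity \eqref{eq:one-step-second-moment} extended to include control terms. The sequence $P_N$ is nondecreasing in $N$. It is also bounded by $P$, because any admissible $u$ on $[0,\infty)$ restricts to a control on $[0,N]$ with $J_N\le J$. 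Hence $P_N\uparrow\bar P\le P$, and passing to the limit in the recursion shows that $\bar P\ge I$ solves \eqref{eq:algebraic-Riccati}. A solution of the ARE is all that (iii) requires, so it does not matter whether $\bar P=P$.

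For (iii) $\Rightarrow$ (i), let $P>0$ solve \eqref{eq:algebraic-Riccati} and let $\Theta$ be given by \eqref{eq:stabilizer}. Write $R:=I+B^\top PB+D^\top PD>0$, $\bar A=A+B\Theta$, $\bar C=C+D\Theta$. The standard completion-of-squares identity gives
\begin{align*}
\bar A^\top P\bar A+\bar C^\top P\bar C+\Theta^\top\Theta+I = I+\cT(P)+\Theta^\top R\Theta+\Theta^\top(B^\top PA+D^\top PC)+(A^\top PB+C^\top PD)\Theta .
\end{align*}
With the given $\Theta$, the last three terms equal $-\Theta^\top R\Theta$, so by the ARE the right-hand side equals $P$. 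Hence
\begin{align*}
P-\bar A^\top P\bar A-\bar C^\top P\bar C=I+\Theta^\top\Theta>0.
\end{align*}
Lemma~\ref{lem:stable}(iv) then shows that the closed loop \eqref{eq:closed-loop} is $L^2$-stable, which proves (i) and shows that $\Theta$ is a stabilizer.

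The main obstacle is the step (ii) $\Rightarrow$ (iii): justifying the finite-horizon dynamic programming identity $V_N(y)=y^\top P_Ny$ with predictable controls, together with monotonicity and boundedness of $P_N$. I would carry out the verification directly. Expanding $\dbE[x_{k+1}^\top P_{N-k-1}x_{k+1}\mid\cF_{k-1}]$ and using $\dbE w_k=0$, $\dbE w_k^2=1$ together with the independence of $w_k$ from $\cF_{k-1}$ yields, with $\Theta_j:=-R_j^{-1}(B^\top P_jA+D^\top P_jC)$ and $R_j:=I+B^\top P_jB+D^\top P_jD$,
\begin{align*}
J_N(y;u)=y^\top P_Ny+\sum_{k}\dbE\,\big|R_{N-k-1}^{1/2}(u_k-\Theta_{N-k-1}x_k)\big|^2 ,
\end{align*}
so $J_N(y;u)\ge y^\top P_Ny$ with equality at the feedback $u_k=\Theta_{N-k-1}x_k$. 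Monotonicity $P_N\le P_{N+1}$ follows from $J_N\le J_{N+1}$ pathwise, since the costs are nonnegative.
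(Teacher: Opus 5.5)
Your proposal is correct and follows the same cycle as the paper: the feedback $u_k=\Theta x_k$ for (i)$\Rightarrow$(ii); monotone, bounded finite-horizon Riccati iterates $P_N$ (with $P_0=0$) converging to a solution of \eqref{eq:algebraic-Riccati} for (ii)$\Rightarrow$(iii); and the identity $P-\bar A^{\top}P\bar A-\bar C^{\top}P\bar C=I+\Theta^{\top}\Theta$ together with Lemma~\ref{lem:stable}(iv) for (iii)$\Rightarrow$(i). The differences are minor. You bound $P_N$ by the infinite-horizon value $V(y)$ rather than by the paper's explicit $c|y|^2$ from the basis construction, and your unproved claim that $V$ is a quadratic form is unnecessary there, since $y^{\top}P_Ny\le V(y)<\infty$ for each $y$ already bounds the nondecreasing sequence. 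You also verify $V_N(y)=y^{\top}P_Ny$ by completing squares instead of citing the LQ literature.
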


\begin{proof}
    (i) $\implies$ (ii).
    Suppose that $\Theta$ is a stabilizer of system \eqref{eq:system}.
    Let $u=\Theta\bar x$, where $\bar x\in l_{\cF}^2(0,\infty;\dbR^n)$ is the solution of \eqref{eq:closed-loop}.
    Substituting $u$ into \eqref{eq:system}, the solution $x$ coincides with $\bar x$.
    Therefore, $x\in l_{\dbF}^2(0,\infty;\dbR^n)$.

    (ii) $\implies$ (iii).
    Let $e_1,\cdots,e_n$ be the standard basis of $\dbR^n$.
    Denote by $x(y,u)$ the solution of \eqref{eq:system} with initial state $y$ and control input $u$.
    For each $i=1,\cdots,n$, there exists $u^i\in l_{\dbF}^2(0,\infty;\dbR^m)$ such that $x^i\equiv x(e_i,u_i)\in l_{\dbF}^2(0,\infty;\dbR^n)$.
    Hence,
    \begin{align*}
        J^i:=\sum_{k=0}^{\infty}\dbE(|x_k^i|^2+|u_k^i|^2)<\infty.
    \end{align*}
    Suppose that $y=\sum_{i=1}^na_ie_i$, where $a_i\in\dbR$.
    Let $u^y=\sum_{i=1}^na_iu_i$.
    Then $x^y\equiv x(y,u^y)=\sum_{i=1}^na_ix^i\in l_{\dbF}^2(0,\infty;\dbR^n)$ by linearity of \eqref{eq:system}.
    Moreover, 
    \begin{align}
    \label{eq:second-moment-upper-bound}
    \begin{split}
        &\sum_{k=0}^{\infty}\dbE(|x_k^y|^2+|u_k^y|^2)=\sum_{k=0}^{\infty}\dbE\left(\left|\sum_{i=1}^na_ix_k^i\right|^2+\left|\sum_{i=1}^na_iu_k^i\right|^2\right)\\
        \leq&\sum_{k=0}^{\infty}\dbE(|y^2|\sum_{i=1}^n|x_k^i|^2+|y|^2\sum_{i=1}^n|u_k^i|^2)=|y|^2\sum_{i=1}^n\sum_{k=0}^{\infty}\dbE(|x_k^i|^2+|u_k^i|^2)\\
        =&|y|^2\sum_{i=1}^nJ^i:=c|y|^2.
    \end{split}
    \end{align}
    For any fixed $N\in\dbN_+$, it follows from the classical LQ problem \citep[see, e.g.,][]{rami2002discrete,elliott2013discrete} that the value function associated with the following optimal control problem
    \begin{align*}
        V_N(y)\triangleq\inf_{u\in l_{\cF}^2(0,N-1;\dbR^m)}\dbE\sum_{k=0}^{N-1}(|x_k|^2+|u_k|^2)
    \end{align*}
    is given by $V_N(y)=y^{\top}P_Ny$, where $\{P_k\}_{k\geq0}$ is generated by the difference Riccati equation
    \begin{align}
    \label{eq:difference-Riccati}
        \begin{cases}
            P_{k+1}=I+A^{\top}P_kA+C^{\top}P_kC\\
            \qquad\qquad-(A^{\top}P_kB+C^{\top}P_kD)(I+B^{\top}P_kB+D^{\top}P_kD)^{-1}(B^{\top}P_kA+D^{\top}P_kC),\\
            P_0=0.
        \end{cases}
    \end{align}
    Note that $V_{N+1}(y)\geq V_{N}(y)$ for any $y\in\dbR^n$.
    We have $P_{N+1}\geq P_N$ for any $N\in\dbN$.
    On the other hand, $y^{\top}P_Ny\leq c|y|^2$ by \eqref{eq:second-moment-upper-bound}.
    Then $P_N\leq cI$ for any $N\in\dbN$.
    Letting $k\to\infty$ in \eqref{eq:difference-Riccati}, we obtain that $\cP\triangleq\lim_{k\to\infty}P_k$ satisfies \eqref{eq:algebraic-Riccati}.

    (iii) $\implies$ (i).
    Suppose that $P\in\dbS_+^n$ is a solution of \eqref{eq:algebraic-Riccati}.
    Let $\Theta$ be defined by \eqref{eq:stabilizer}.
    Then 
    \begin{align*}
        (A+B\Theta)^{\top}P(A+B\Theta)+(C+D\Theta)^{\top}P(C+D\Theta)-P=-I-\Theta^{\top}\Theta<0.
    \end{align*}
    By \autoref{lem:stable} and \autoref{def:stabilizable}, system \eqref{eq:system} is $L^2$-stabilizable.
    The proof is done.
\end{proof}

\section{Exact controllability}
\label{sec:controllability}

In this section, we investigate the exact controllability of the backward-structured system \eqref{eq:canonical}. 
We first consider controllability over a prescribed finite horizon and derive equivalent characterizations in terms of the controllability Gramian, a Kalman-type rank condition, and the reachable subspace. 
We then remove the prescribed-horizon requirement, establish a controllability decomposition, and derive a Hautus-type criterion for exact null controllability.

By \autoref{ass:Ainvertible}, system \eqref{eq:canonical} can be rewritten as 
\begin{align}
    \label{eq:backward}
    \begin{cases}
        x_k=\widehat{A} x_{k+1}+\widehat{B} u_k+\widehat{C} z_k-\widehat{A} w_kz_k,\quad k\in\dbN,\\
        x_0=y\in\dbR^n,
    \end{cases}
\end{align}
where 
\begin{equation}
\label{eq:hA-A}
 \widehat{A}=A^{-1},\qquad \widehat{B}=-A^{-1}B,\qquad \widehat{C}=-A^{-1}C.
\end{equation}
We first define a matrix-valued stochastic process $\{\Phi_k\}_{k\geq0}$ by
\begin{align}
\label{eq:Phi}
    \begin{cases}
        \Phi_0=I,\\
        \Phi_k=M_0M_1\cdots M_{k-1},\quad k\in\dbN+,
    \end{cases}
\end{align}
where $M_k=\hA+w_k\hC$ for any $k\in\dbN$.
By the definition of $\Phi$, we have the following result.

\begin{lemma}
\label{lem:BSDE-express}
    Suppose that \autoref{ass:Ainvertible} holds.
    Let $u\in l_{\dbF}^2(0,N;\dbR^m)$.
    Suppose that equation \eqref{eq:backward} with terminal state $x_{N+1}=\xi\in L_{\cF_N}^2(\Omega;\dbR^n)$ admits a solution $(x,z)\in l_{\dbF}^2(0,N+1;\dbR^n)\times l_{\dbF}^2(0,N;\dbR^n)$.
    Then, for every $k\in[0,N]$, the solution satisfies 
    \begin{align}
    \label{eq:z-representation}
        &z_k=\dbE(w_kx_{k+1}|\cF_{k-1}),\quad\forall k\in[0,N],\\
    \label{eq:x-representation}
        &x_k=\dbE(M_kx_{k+1}+\hB u_k|\cF_{k-1}),\quad\forall k\in[0,N].
    \end{align}
    Moreover, the initial state can be represented by 
    \begin{align*}
        x_0=\dbE(\Phi_{N+1}\xi)+\dbE\sum_{k=0}^N\Phi_k\hB u_k,
    \end{align*}
    where $\{\Phi_k\}_{k\geq0}$ is defined by \eqref{eq:Phi}.
\end{lemma}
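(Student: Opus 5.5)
The plan is to work one step at a time with equation \eqref{eq:backward} and take conditional expectations with respect to $\cF_{k-1}$, using the fact that $w_k$ is independent of $\cF_{k-1}$ with $\dbE w_k=0$ and $\dbE w_k^2=1$. The key structural fact is predictability. The processes $x$, $u$ and $z$ are $\dbF$-predictable, so $x_k,u_k,z_k\in\cF_{k-1}$. Only $x_{k+1}\in\cF_k$ may depend on $w_k$.

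\textbf{Step 1: the representation of $z_k$.} I start from the forward form $x_{k+1}=Ax_k+Bu_k+Cz_k+w_kz_k$, which is equivalent to \eqref{eq:backward} under \autoref{ass:Ainvertible}. Multiply by $w_k$ and condition on $\cF_{k-1}$. Since $Ax_k+Bu_k+Cz_k$ is $\cF_{k-1}$-measurable, $\dbE(w_k\,|\,\cF_{k-1})=0$, and $\dbE(w_k^2\,|\,\cF_{k-1})=1$, this gives $\dbE(w_kx_{k+1}\,|\,\cF_{k-1})=z_k$, which is \eqref{eq:z-representation}. Integrability holds because $w_k$ is independent of the $\cF_{k-1}$-measurable square-integrable factors, so each product is integrable.

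\textbf{Step 2: the representation of $x_k$.} Condition \eqref{eq:backward} on $\cF_{k-1}$. This gives
\[
x_k=\hA\,\dbE(x_{k+1}\,|\,\cF_{k-1})+\hB u_k+\hC z_k-\hA\,\dbE(w_kz_k\,|\,\cF_{k-1}),
\]
and the last term vanishes because $z_k\in\cF_{k-1}$. Substituting $z_k$ from Step 1 yields
\[
x_k=\dbE\bigl((\hA+w_k\hC)x_{k+1}+\hB u_k\,\big|\,\cF_{k-1}\bigr)=\dbE\bigl(M_kx_{k+1}+\hB u_k\,\big|\,\cF_{k-1}\bigr),
\]
which is \eqref{eq:x-representation}.

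\textbf{Step 3: the representation of $x_0$.} Here I would prove by backward induction on $j$ from $N+1$ down to $0$ the identity
\[
x_0=\dbE(\Phi_jx_j)+\dbE\sum_{k=0}^{j-1}\Phi_k\hB u_k .
\]
The case $j=0$ is trivial, since $\Phi_0=I$ and $x_0$ is deterministic because $\cF_{-1}$ is trivial. It is cleaner to run the induction forward in $j$. Given the identity for $j$, substitute $x_j=\dbE(M_jx_{j+1}+\hB u_j\,|\,\cF_{j-1})$ into $\dbE(\Phi_jx_j)$. Since $\Phi_j=M_0\cdots M_{j-1}$ is $\cF_{j-1}$-measurable, the tower property gives
\[
\dbE(\Phi_jx_j)=\dbE(\Phi_jM_jx_{j+1})+\dbE(\Phi_j\hB u_j)=\dbE(\Phi_{j+1}x_{j+1})+\dbE(\Phi_j\hB u_j).
\]
Taking $j=N+1$ and $x_{N+1}=\xi$ gives the stated formula.

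\textbf{Main obstacle.} The delicate point is integrability in Step 3. The matrix $\Phi_j$ is a product of independent square-integrable factors, but it need not be bounded, so pulling it inside the conditional expectation needs $\Phi_jx_{j+1}$ to be integrable. I would handle this by noting that each $M_i$ is independent of $\cF_{i-1}$ and has finite second moment, so $\dbE|\Phi_j|^2\le\prod_{i<j}\dbE|M_i|^2<\infty$ by independence. The same independence gives $\dbE|\Phi_jM_jx_{j+1}|$ finite: write $x_{j+1}$ via the forward equation as an $\cF_{j-1}$-measurable part plus $w_j$ times an $\cF_{j-1}$-measurable part. Then the expectations factor, the second moments are finite, and Cauchy--Schwarz bounds each term. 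With these products justified, the tower-property steps are legitimate.
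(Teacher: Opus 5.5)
Your proposal is correct and follows essentially the paper's route: condition on $\cF_{k-1}$, and extract $z_k$ by multiplying by $w_k$. You do this from the forward form, while the paper uses the equivalent identity $w_kz_k=x_{k+1}-\dbE(x_{k+1}|\cF_{k-1})$. Then substitute to get the $x_k$-representation, and iterate with the tower property using $\Phi_j\in\cF_{j-1}$.

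Your integrability justification, which the paper omits, is valid. It can be shortened by noting $\Phi_jM_j=\Phi_{j+1}$ and applying Cauchy--Schwarz directly to $\dbE|\Phi_{j+1}x_{j+1}|$ and $\dbE|\Phi_j\hB u_j|$. Also, your Step 3 announces a backward induction but actually runs a forward induction from $j=0$; the forward argument is the correct one.
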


\begin{proof}
    For any $k\in[0,N]$, taking conditional expectation on both sides of equation \eqref{eq:backward} with respect to $\cF_{k-1}$, we have 
    \begin{align}
    \label{eq:x-equality}
        x_k&=\dbE(x_k|\cF_{k-1})=\hA\dbE(x_{k+1}|\cF_{k-1})+\hB u_k+\hC z_k.
    \end{align}
    Combining \eqref{eq:x-equality} and \eqref{eq:backward}, we have
    \begin{align}
    \label{eq:z-equality}
        w_kz_k=x_{k+1}-\dbE(x_{k+1}|\cF_{k-1}),\quad\forall k\in[0,N].
    \end{align}
    Multiplying both sides of equation \eqref{eq:z-equality} by $w_k$ and then taking the conditional expectation with respect to $\cF_{k-1}$ yields \eqref{eq:z-representation}.
    Substituting \eqref{eq:z-representation} into \eqref{eq:x-equality}, we obtain \eqref{eq:x-representation}.
    Consequently, we have 
    \begin{align*}
        x_0&=\dbE(M_0x_1+\hB u_0|\cF_{-1})=\dbE(\Phi_1x_1+\Phi_0\hB u_0)\\
        &=\dbE[\Phi_1\dbE(M_1x_2+\hB u_1|\cF_0)+\Phi_0\hB u_0]\\
        &=\dbE(\Phi_2x_2+\Phi_1\hB u_1+\Phi_0\hB u_0)\\
        &=\cdots\\
        &=\dbE(\Phi_{N+1}x_{N+1})+\dbE\sum_{k=0}^N\Phi_k\hB u_k.
    \end{align*}
    The proof is done.
\end{proof}

By \autoref{lem:BSDE-express}, the contribution of the control sequence to the initial state is determined by the random matrices $\{\Phi_k\hB\}_{k\geq0}$. This motivates us to introduce the following controllability Gramian, which will be used to characterize the controllability of system \eqref{eq:canonical}.
For any $N\in\dbN$, define the controllability Gramian 
\begin{align}
\label{eq:Gramian}
    G_N:=\sum_{k=0}^N\dbE(\Phi_k\hB\hB^{\top}\Phi_k^{\top}).
\end{align}
To obtain a recursive representation of $G_N$, define the positive linear operator $\cL:\dbS^n\to\dbS^n$ by 
\begin{align}
\label{eq:cL}
    \cL(P)=\hA P\hA^{\top}+\hC P\hC^{\top},\quad\forall P\in\dbS^n.
\end{align}
Hereafter, unless otherwise specified, $\cL$ denotes the operator defined in \eqref{eq:cL}.

\begin{remark}
    With respect to the Frobenius inner product, the adjoint operator of $\cL$ is given by
    \begin{align*}
        \cL^*(P)=\hA^{\top}P\hA+\hC^{\top}P\hC,\quad\forall P\in\dbS^n.
    \end{align*}
    Hence, both $\cL$ and $\cL^*$ are positive linear operators.
    If $0\neq H\in\bar\dbS_+^n$ is an eigenmatrix of either $\cL$ or $\cL^*$, then its corresponding eigenvalue $\lambda$ is nonnegative.
    Indeed, $\lambda H=\cL(H)\geq0$ or $\lambda H=\cL^*(H)\geq0$, which implies that $\lambda\geq0$.
\end{remark}

The relationship between $G_N$ and $\cL$ is characterized by the following lemma.

\begin{lemma}
\label{lem:GN}
    For any $k\in\dbN$, we have $\cL^k(\hB\hB^{\top})=\dbE(\Phi_k\hB\hB^{\top}\Phi_k^{\top})$.
    Consequently, 
    \begin{align}
        G_N=\sum_{k=0}^N\cL^k(\hB\hB^{\top})=\hB\hB^{\top}+\cL(G_{N-1}),\quad\forall N\in\dbN_+.
    \end{align}
\end{lemma}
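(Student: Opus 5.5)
The plan is to prove the identity $\cL^k(\hB\hB^{\top})=\dbE(\Phi_k\hB\hB^{\top}\Phi_k^{\top})$ by induction on $k$, for an arbitrary deterministic $P\in\dbS^n$ in place of $\hB\hB^{\top}$. The target claim is
\begin{align*}
\dbE(\Phi_kP\Phi_k^{\top})=\cL^k(P),\quad\forall P\in\dbS^n,\ \forall k\in\dbN .
\end{align*}
The case $k=0$ is trivial since $\Phi_0=I$ and $\cL^0$ is the identity. The useful structural fact is that $\Phi_{k+1}=M_0\Phi'_k$, where $\Phi'_k:=M_1\cdots M_k$. The factor $M_0=\hA+w_0\hC$ is independent of $\Phi'_k$, because $\Phi'_k$ depends only on $w_1,\dots,w_k$. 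Moreover, $\Phi'_k$ has the same law as $\Phi_k$, since the $w_j$ are independent and the structure of $M_j$ does not depend on $j$.

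I will peel off the \emph{first} factor rather than the last. Conditioning on $w_0$ and using independence gives
\begin{align*}
\dbE(\Phi_{k+1}P\Phi_{k+1}^{\top})=\dbE\big(M_0\,\dbE(\Phi'_kP\Phi_k'^{\top})\,M_0^{\top}\big)=\dbE\big(M_0\,\cL^k(P)\,M_0^{\top}\big).
\end{align*}
In the last equality I will use the induction hypothesis. Applying it to $\Phi'_k$ requires only that $\dbE(\Phi'_kP\Phi_k'^{\top})$ depends on the joint law of $(w_1,\dots,w_k)$ alone, and that this law matches the one entering the computation for $\Phi_k$.

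Should the $w_k$ fail to be identically distributed, the law-matching step needs care. I will avoid it by proving the stronger statement
\begin{align*}
\dbE(M_jM_{j+1}\cdots M_{j+k-1}\,P\,M_{j+k-1}^{\top}\cdots M_j^{\top})=\cL^k(P)\quad\text{for every shift } j .
\end{align*}
This version uses only $\dbE w_i=0$, $\dbE w_i^2=1$ and independence, so induction on $k$ uniformly in $j$ works without identical distribution.

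For the one-step computation, let $Q=\cL^k(P)$, which is deterministic. Then
\begin{align*}
\dbE(M_0QM_0^{\top})=\hA Q\hA^{\top}+\dbE(w_0)(\hA Q\hC^{\top}+\hC Q\hA^{\top})+\dbE(w_0^2)\hC Q\hC^{\top}=\cL(Q)=\cL^{k+1}(P).
\end{align*}
This closes the induction. Alternatively, I could peel off the last factor $M_k$, conditioning on $\cF_{k-1}$. That route gives $\dbE(\Phi_k\,\cL(P)\,\Phi_k^{\top})$, which is also fine and avoids the shift issue entirely: apply the hypothesis at level $k$ to the matrix $\cL(P)$. I will use this route, since it is cleaner. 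The point is that $\Phi_k$ is $\cF_{k-1}$-measurable and $w_k$ is independent of $\cF_{k-1}$, so $\dbE(M_kPM_k^{\top}\mid\cF_{k-1})=\cL(P)$.

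Finally, setting $P=\hB\hB^{\top}$ in \eqref{eq:Gramian} gives $G_N=\sum_{k=0}^N\cL^k(\hB\hB^{\top})$. Splitting off the $k=0$ term and using linearity of $\cL$ gives $G_N=\hB\hB^{\top}+\cL\big(\sum_{k=0}^{N-1}\cL^k(\hB\hB^{\top})\big)=\hB\hB^{\top}+\cL(G_{N-1})$. The only delicate point is integrability, i.e.\ that the conditional expectations are legitimate. This holds because each $M_j$ is in $L^2$ and the $M_j$ are independent, so products have finite second moments. I expect no real obstacle beyond choosing the last-factor induction, which sidesteps any distributional assumptions.
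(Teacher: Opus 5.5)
Your proof is correct and takes essentially the paper's approach. Both arguments rest on the one-step identity $\dbE(M_kPM_k^{\top})=\cL(P)$, which uses only $\dbE w_k=0$ and $\dbE w_k^2=1$, together with independence of the $w_k$; both then get $G_N=\hB\hB^{\top}+\cL(G_{N-1})$ by splitting off the $k=0$ term. The difference is only in how the iteration is organized. The paper peels off the first factor $M_0$ and iterates over the shifted products $M_1\cdots M_{k-1}$, which, as you observed, needs no identical distribution. Your final version peels off the last factor by conditioning on $\cF_{k-1}$ and inducts over all $P\in\dbS^n$, which is a slightly tidier formalization of the paper's iterated step.
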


\begin{proof}
    For any $k\in\dbN$ and $P\in\dbS^n$, we have
    \begin{align*}
    \dbE(M_kPM_k^{\top})&=\dbE[(\hA+w_k\hC)P(\hA+w_k\hC)^{\top}]\\
    &=\dbE(\hA P\hA^{\top}+w_k\hA P\hC^{\top}+w_k\hC P\hA^{\top}+w_k^2\hC P\hC^{\top})\\
    &=\hA P\hA^{\top}+\hC P\hC^{\top}=\cL(P).
    \end{align*}
    Then, for any $k\in\dbN$, using the independence of $\{M_k\}_{k\geq0}$, we obtain
    \begin{align*}
        \dbE(\Phi_k\hB\hB^{\top}\Phi_k^{\top})&=\dbE(M_0\cdots M_{k-1}\hB\hB^{\top}M_{k-1}^{\top}\cdots M_0^{\top})\\
        &=\dbE[M_0\dbE(M_1\cdots M_{k-1}\hB\hB^{\top}M_{k-1}^{\top}\cdots M_1^{\top})M_0^{\top}]\\
        &=\cL(\dbE(M_1\cdots M_{k-1}\hB\hB^{\top}M_{k-1}^{\top}\cdots M_1^{\top}))\\
        &=\cdots\\
        &=\cL^k(\hB\hB^{\top}).
    \end{align*}
    The proof is done.
\end{proof}

To derive a finite-dimensional rank characterization of $G_N$, we introduce the following recursively defined matrices.
Define 
\begin{align}
\label{eq:R}
    \begin{cases}
        R_0=\hB,\\
        R_{k+1}=(\hB,\hA R_k,\hC R_k),\quad k\in\dbN.
    \end{cases}
\end{align}

\begin{lemma}
\label{lem:G-R}
    For any $k\in\dbN$, $G_k=R_kR_k^{\top}$.
\end{lemma}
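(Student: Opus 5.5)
The plan is to prove $G_k=R_kR_k^{\top}$ by induction on $k$, using the recursion of \autoref{lem:GN} together with the block structure of $R_{k+1}$ in \eqref{eq:R}.

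For the base case $k=0$, \eqref{eq:Gramian} gives $G_0=\dbE(\Phi_0\hB\hB^{\top}\Phi_0^{\top})=\hB\hB^{\top}$, since $\Phi_0=I$. This equals $R_0R_0^{\top}$ because $R_0=\hB$.

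For the inductive step, assume $G_k=R_kR_k^{\top}$ for some $k\in\dbN$. Since $R_{k+1}=(\hB,\hA R_k,\hC R_k)$ is a horizontal block concatenation, multiplying by its transpose adds the outer products of the blocks:
\begin{align*}
R_{k+1}R_{k+1}^{\top}=\hB\hB^{\top}+\hA R_kR_k^{\top}\hA^{\top}+\hC R_kR_k^{\top}\hC^{\top}.
\end{align*}
By the definition \eqref{eq:cL} of $\cL$, the last two terms together equal $\cL(R_kR_k^{\top})$. The induction hypothesis turns this into $\cL(G_k)$. Applying the recursion $G_{k+1}=\hB\hB^{\top}+\cL(G_k)$ from \autoref{lem:GN} with $N=k+1\in\dbN_+$ then gives $R_{k+1}R_{k+1}^{\top}=G_{k+1}$, which completes the induction.

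There is no real obstacle. The only point needing care is the block-matrix identity $(X,Y,Z)(X,Y,Z)^{\top}=XX^{\top}+YY^{\top}+ZZ^{\top}$. The resulting identity is what later gives $\im G_k=\im R_k$, and hence $\rank G_k=\rank R_k$.
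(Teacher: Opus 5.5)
Your proposal is correct and matches the paper's proof: induction on $k$, with base case $G_0=\hB\hB^{\top}=R_0R_0^{\top}$. The inductive step applies the recursion $G_{k+1}=\hB\hB^{\top}+\cL(G_k)$ from \autoref{lem:GN} and expands $R_{k+1}R_{k+1}^{\top}$ blockwise.
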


\begin{proof}
    For $k=0$, $G_0=\hB\hB^{\top}=R_0R_0^{\top}$.
    Suppose that $G_k=R_kR_k^{\top}$ for some $k\in\dbN$.
    By \autoref{lem:GN}, we have
    \begin{align*}
        G_{k+1}&=\hB\hB^{\top}+\cL(G_k)=\hB\hB^{\top}+\cL(R_kR_k^{\top})\\
        &=\hB\hB^{\top}+\hA R_kR_k^{\top}\hA^{\top}+\hC R_kR_k^{\top}\hC^{\top}\\
        &=(\hB,\hA R_k,\hC R_k)(\hB,\hA R_k,\hC R_k)^{\top}\\
        &=R_{k+1}R_{k+1}^{\top}.
    \end{align*}
    The proof is done.
\end{proof}

By \autoref{lem:G-R}, $G_N$ is invertible if and only if $R_N$ has full row rank.
To describe the column spaces of $R_k$ recursively, we introduce the following sequence of subspaces:
\begin{align}
\label{eq:V}
    \begin{cases}
        V_0=\im \hB,\\
        V_{k+1}=V_0+\hA V_k+\hC V_k,\quad k\in\dbN.
    \end{cases}
\end{align}

\begin{lemma}
\label{lem:V-R}
    For any $k\in\dbN$, $V_k=\im R_k$.
\end{lemma}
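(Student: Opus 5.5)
We argue by induction on $k$, mirroring the recursive definitions \eqref{eq:R} and \eqref{eq:V}. The only fact needed is elementary: the column space of a block matrix $(X,Y,Z)$ with a common number of rows is the sum $\im X+\im Y+\im Z$. In addition, for any matrix $M$ and any $R$, we have $\im(MR)=M\,\im R$.

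For the base case $k=0$, we have $R_0=\hB$ by \eqref{eq:R} and $V_0=\im\hB$ by \eqref{eq:V}. Hence $V_0=\im R_0$ directly from the definitions.

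For the inductive step, suppose that $V_k=\im R_k$ for some $k\in\dbN$. By \eqref{eq:R} and the block-matrix fact,
\begin{align*}
\im R_{k+1}=\im(\hB,\hA R_k,\hC R_k)=\im\hB+\im(\hA R_k)+\im(\hC R_k)=V_0+\hA\,\im R_k+\hC\,\im R_k .
\end{align*}
Substituting the induction hypothesis $\im R_k=V_k$ gives $V_0+\hA V_k+\hC V_k$. By \eqref{eq:V}, this is exactly $V_{k+1}$, which closes the induction.

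There is no real obstacle here. The only point requiring care is to justify the column-space identity for a block matrix. If $v=(a^\top,b^\top,c^\top)^\top$ is partitioned conformally with the blocks, then $(X,Y,Z)v=Xa+Yb+Zc$, which lies in $\im X+\im Y+\im Z$. Conversely, any element $Xa+Yb+Zc$ of that sum arises in this way. Neither \autoref{lem:G-R} nor \autoref{ass:Ainvertible} is needed; the statement is purely linear-algebraic. Combined with \autoref{lem:G-R}, it yields $\im G_N=V_N$, so $G_N$ is invertible if and only if $\dim V_N=n$.
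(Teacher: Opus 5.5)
Your proof is correct and matches the paper's argument: induction on $k$ with the base case $V_0=\im\hB=\im R_0$, and the inductive step $\im(\hB,\hA R_k,\hC R_k)=\im\hB+\hA\,\im R_k+\hC\,\im R_k=V_{k+1}$ via \eqref{eq:V}. Your explicit justification of the block column-space identity is accurate, and so is your remark that \autoref{ass:Ainvertible} is not needed.
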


\begin{proof}
    Note that $V_0=\im R_0$.
    Suppose that $V_k=\im R_k$ for some $k\in\dbN$.
Then 
\begin{align*}
    \im R_{k+1}&=\im(\hB,\hA R_k,\hC R_k)=\im\hB+\hA\im R_k+\hC\im R_k\\
    &=V_0+\hA V_k+\hC V_k=V_{k+1}.
\end{align*}
By induction, we finish the proof.
\end{proof}

The preceding lemmas relate the controllability Gramian $G_N$ to $R_N$ and $V_N$. These relations lead to the following finite-horizon controllability criterion.

\begin{theorem}
\label{thm:controllable-N}
    Suppose that \autoref{ass:Ainvertible} holds.
    Let $N\in\dbN$.
    Then the following statements are equivalent.
    \begin{enumerate}[(i)]
        \item System \eqref{eq:canonical} is exactly controllable on $[0,N]$.
        \item System \eqref{eq:canonical} is exactly null controllable on $[0,N]$.
        \item $G_N$ is invertible, where $G_N$ is defined by \eqref{eq:Gramian}.
        \item $\rank R_N=n$, where $R_N$ is defined by \eqref{eq:R}.
        \item $\dim V_N=n$, where $V_N$ is defined by \eqref{eq:V}.
    \end{enumerate}
\end{theorem}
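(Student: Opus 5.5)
The equivalences (i)$\iff$(ii) are already given by \autoref{prop:controllable-equivalent}. Next, (iii)$\iff$(iv) follows from \autoref{lem:G-R}: $G_N=R_NR_N^{\top}$ is invertible if and only if $R_N$ has full row rank. Likewise (iv)$\iff$(v) follows from \autoref{lem:V-R}, since $\dim V_N=\dim\im R_N=\rank R_N$. The substance is therefore the link (ii)$\iff$(iii), and I would prove it by a duality and Gramian-construction argument based on \autoref{lem:BSDE-express}.

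For (iii)$\implies$(ii), I assume $G_N$ invertible and fix $y\in\dbR^n$. I try the feedforward control
\begin{align*}
u_k=\hB^{\top}\Phi_k^{\top}\eta,\qquad \eta:=G_N^{-1}y .
\end{align*}
This $u$ is predictable, because $\Phi_k=M_0\cdots M_{k-1}$ is $\cF_{k-1}$-measurable, and it is square integrable since the $w_k$ have finite second moments. The state must be built backward from the terminal condition $x_{N+1}=0$. I define, for $k=N,N-1,\dots,0$,
\begin{align*}
z_k=\dbE(w_kx_{k+1}|\cF_{k-1}),\qquad x_k=\dbE(M_kx_{k+1}+\hB u_k|\cF_{k-1}),
\end{align*}
as suggested by \eqref{eq:z-representation}--\eqref{eq:x-representation}. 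The key verification is that this pair actually solves \eqref{eq:backward}, that is, $x_{k+1}=A x_k-Bu_k-Cz_k+w_kz_k$ in the forward form. This requires $x_{k+1}$ to lie in the span $\{\zeta+w_k\zeta':\zeta,\zeta'\in\cF_{k-1}\}$.

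I expect this to be the main obstacle, since Remark~\ref{rk:controllable} shows that a general $\cF_k$-measurable terminal variable need not be representable. The fix is to track the structure inductively. I claim each $x_{k+1}$ is an affine function of $w_k$ with $\cF_{k-1}$-measurable coefficients. For $x_{N+1}=0$ this is trivial. If $x_{k+1}=\alpha_k+w_k\beta_k$, then $x_k=\dbE(M_k x_{k+1}|\cF_{k-1})+\hB u_k$. The term $\hB u_k$ is $\cF_{k-1}$-measurable. The conditional expectation is also $\cF_{k-1}$-measurable, so $x_k$ is $\cF_{k-1}$-measurable, which is stronger than needed and hence affine in $w_{k-1}$ with $\cF_{k-2}$-measurable coefficients. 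In fact $x_{k+1}$ is then $\cF_k$-measurable but depends on all of $w_0,\dots,w_k$, so I must refine the argument. I would instead show directly that, with these definitions, $x_{k+1}-\dbE(x_{k+1}|\cF_{k-1})=w_kz_k$, which says exactly that $x_{k+1}$ is affine in $w_k$. To establish this, I would write $u$ and $x$ explicitly as polynomial-in-$\Phi$ expressions, e.g.
\begin{align*}
x_k=\sum_{j\ge k}\dbE\big(\Phi_k^{-1}\Phi_j\,|\,\cF_{k-1}\big)\hB\hB^{\top}\Phi_j^{\top}\eta,
\end{align*}
and check that dependence on $w_{k-1}$ enters only linearly through $\Phi_j^{\top}=\cdots M_{k-1}^{\top}\cdots$, which is affine in $w_{k-1}$. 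Once the equation holds, \autoref{lem:BSDE-express} gives $x_0=\dbE\sum_k\Phi_k\hB\hB^{\top}\Phi_k^{\top}\eta=G_N\eta=y$, so $y$ is steered to $0$.

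For (ii)$\implies$(iii), I argue by contradiction. If $G_N$ is singular, pick $\eta\neq0$ with $\eta^{\top}G_N\eta=0$. Then $\sum_k\dbE|\hB^{\top}\Phi_k^{\top}\eta|^2=0$, so $\eta^{\top}\Phi_k\hB=0$ almost surely for every $k$. Taking $y=\eta$ and any admissible $(u,z)$ with $x_{N+1}=0$, \autoref{lem:BSDE-express} yields
\begin{align*}
\eta^{\top}\eta=\eta^{\top}x_0=\dbE\sum_{k=0}^N\eta^{\top}\Phi_k\hB u_k=0,
\end{align*}
a contradiction.
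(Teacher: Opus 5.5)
\textbf{There is a genuine gap in the constructive direction.} Your reductions (i)$\iff$(ii) and (iii)$\iff$(iv)$\iff$(v) coincide with the paper. So do the contradiction argument for (ii)$\implies$(iii) and the control $u_k=\hB^{\top}\Phi_k^{\top}\eta$. The problem is (iii)$\implies$(ii). You correctly see that everything hinges on showing $x_{k+1}-\dbE(x_{k+1}|\cF_{k-1})=w_kz_k$ for your backward-defined pair. You never establish this, and the explicit formula you propose for it is wrong.

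In $\sum_{j\ge k}\dbE(\Phi_k^{-1}\Phi_j|\cF_{k-1})\hB\hB^{\top}\Phi_j^{\top}\eta$, the factor $\Phi_j^{\top}$ with $j>k$ is not $\cF_{k-1}$-measurable. It therefore cannot be pulled outside the conditional expectation. As displayed, your $x_k$ is not even $\cF_{k-1}$-measurable, so the check you intend to run on it cannot succeed. Also, $\Phi_k^{-1}$ need not exist: $M_i=\hA+w_i\hC$ can be singular with positive probability when $w_i$ has atoms. You should write the product $M_k\cdots M_{j-1}$ directly.

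\textbf{The missing idea} is to evaluate the conditional expectation using the independence of the $w_i$. Unrolling your recursion correctly gives
\begin{align*}
x_k=\sum_{j=k}^N\dbE\big(M_k\cdots M_{j-1}\hB\hB^{\top}M_{j-1}^{\top}\cdots M_k^{\top}\,\big|\,\cF_{k-1}\big)\Phi_k^{\top}\eta
=\sum_{j=k}^N\cL^{j-k}(\hB\hB^{\top})\Phi_k^{\top}\eta=G_{N-k}\Phi_k^{\top}\eta .
\end{align*}
This holds because $M_k,\dots,M_{j-1}$ are independent of $\cF_{k-1}$; it is exactly the computation in Lemma~\ref{lem:GN}. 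Consequences:
\begin{itemize}
\item $x_{k+1}=G_{N-k-1}(\hA^{\top}+w_k\hC^{\top})\Phi_k^{\top}\eta$, which is visibly affine in $w_k$ with $\cF_{k-1}$-measurable coefficients.
\item $z_k=\dbE(w_kx_{k+1}|\cF_{k-1})=G_{N-k-1}\hC^{\top}\Phi_k^{\top}\eta$ for $k<N$, and $z_N=0$.
\item The backward equation reduces to the identity $G_{N-k}=\hB\hB^{\top}+\cL(G_{N-k-1})$.
\end{itemize}
This is precisely the paper's construction \eqref{eq:construction}. The paper simply writes these closed forms down and verifies the equation directly, with no induction and no measurability argument needed. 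Note also a sign slip: the forward form of \eqref{eq:backward} is $x_{k+1}=Ax_k+Bu_k+Cz_k+w_kz_k$, not $Ax_k-Bu_k-Cz_k+w_kz_k$.
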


\begin{proof}
    (i) $\iff$ (ii) follows from \autoref{prop:controllable-equivalent}.

    (ii) $\implies$ (iii).
    Suppose that $G_N$ is not invertible.
    Then there exists $0\neq\eta\in\dbR^n$ such that $G_N\eta=0$.
    Hence, we have
    \begin{align*}
        \sum_{k=0}^N\dbE(\eta^{\top}\Phi_k\hB\hB^{\top}\Phi_k^{\top}\eta)=\eta^{\top}G_N\eta=0,
    \end{align*}
    which implies that $\eta^{\top}\Phi_k\hB=0\ \as$ for any $k\in[0,N]$.
    On the other hand, there exists $(u,z)\in l_{\dbF}^2(0,N;\dbR^m)\times l_{\dbF}^2(0,N;\dbR^n)$ such that $x_0=\eta,x_{N+1}=0$, where $x$ satisfies equation \eqref{eq:backward}.
    By \autoref{lem:BSDE-express}, $\eta=\dbE\sum_{k=0}^N\Phi_k\hB u_k$.
    Hence, 
    \begin{align*}
        \eta^{\top}\eta=\dbE\sum_{k=0}^N\eta^{\top}\Phi_k\hB u_k=0,
    \end{align*}
    which contradicts $\eta\neq0$.

    (iii) $\implies$ (ii).
    Fix any $y\in\dbR^n$ and set $g=G_N^{-1}y$.
    Let 
    \begin{align}
    \label{eq:construction}
        \begin{cases}
            x_k=G_{N-k}\Phi_k^{\top}g,\quad k\in[0,N],\\
            u_k=\hB^{\top}\Phi_k^{\top}g,\quad k\in[0,N],\\
            z_k=G_{N-k-1}\hC^{\top}\Phi_k^{\top}g,\quad k\in[0,N-1],\\
            z_N=0,\\
            x_{N+1}=0.
        \end{cases}
    \end{align}
    Then $(u,z)\in l_{\dbF}^2(0,N;\dbR^m)\times l_{\dbF}^2(0,N;\dbR^n)$.
    Moreover, for any $k\in[0,N-1]$, we have 
    \begin{align*}
        &\hA x_{k+1}+\hB u_k+\hC z_k-\hA w_kz_k\\
        =&\hA G_{N-k-1}\Phi_{k+1}^{\top}g+\hB\hB^{\top}\Phi_k^{\top}g+\hC G_{N-k-1}\hC^{\top}\Phi_k^{\top}g-\hA w_kG_{N-k-1}\hC^{\top}\Phi_k^{\top}g\\
        =&\hA G_{N-k-1}(\hA^{\top}+w_k\hC^{\top})\Phi_k^{\top}g+\hB\hB^{\top}\Phi_k^{\top}g+\hC G_{N-k-1}\hC^{\top}\Phi_k^{\top}g-\hA w_kG_{N-k-1}\hC^{\top}\Phi_k^{\top}g\\
        =&[\hB\hB^{\top}+\cL(G_{N-k-1})]\Phi_k^{\top}g\\
        =&G_{N-k}\Phi_k^{\top}g\\
        =&x_k.
    \end{align*}
    It remains to verify the initial state condition and the state equation at $k=N$.
    Note that
    \begin{align*}
        &x_0=G_N\Phi_0^{\top}g=G_Ng=y,\\
        &Ax_N+Bu_N+Cz_N+w_Nz_N=(A\hB\hB^{\top}+B\hB^{\top})\Phi_N^{\top}g=0=x_{N+1}.
    \end{align*}
    This proves that system \eqref{eq:canonical} is exactly null controllable on $[0,N]$.

    (iii) $\iff$ (iv) follows from \autoref{lem:G-R} and (iv) $\iff$ (v) follows from \autoref{lem:V-R}.
    The proof is done.
\end{proof}

\begin{remark}
    \citet{xu2023multiplicative} establish an equivalent characterization of controllability at some finite horizon. 
    In contrast, \autoref{thm:controllable-N} gives equivalent characterizations of controllability over each prescribed horizon $N$, thereby providing a more refined result.
\end{remark}

\begin{corollary}
    Suppose that \autoref{ass:Ainvertible} holds.
    Suppose that system \eqref{eq:canonical} is exactly null controllable on $[0,N]$.
    Then system \eqref{eq:canonical} is exactly (null) controllable on $[0,N+k]$ for any $k\in\dbN$.
\end{corollary}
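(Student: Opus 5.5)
The plan is to reduce the claim to the rank criterion in \autoref{thm:controllable-N}. I will show that the subspaces $V_k$ defined in \eqref{eq:V} are nondecreasing in $k$, i.e.\ $V_k\subset V_{k+1}$ for every $k\in\dbN$. Once this monotonicity is available, the hypothesis gives $\dim V_N=n$ by (ii)$\implies$(v) of \autoref{thm:controllable-N}. Then $V_{N+k}\supset V_N=\dbR^n$, so $\dim V_{N+k}=n$. Applying (v)$\implies$(i),(ii) of \autoref{thm:controllable-N} with horizon $N+k$ yields exact controllability and exact null controllability on $[0,N+k]$.

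Monotonicity follows by induction on $k$. For the base case, $V_0=\im\hB\subset V_0+\hA V_0+\hC V_0=V_1$. For the inductive step, assume $V_{k-1}\subset V_k$. Then
\begin{align*}
V_k=V_0+\hA V_{k-1}+\hC V_{k-1}\subset V_0+\hA V_k+\hC V_k=V_{k+1}.
\end{align*}

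An alternative route goes through the Gramian. By \autoref{lem:GN}, $G_{N+k}=G_N+\sum_{j=N+1}^{N+k}\cL^j(\hB\hB^{\top})\geq G_N>0$, because $\cL$ is a positive operator. Hence $G_{N+k}$ is invertible, and (iii)$\implies$(i),(ii) of \autoref{thm:controllable-N} gives the same conclusion.

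There is no real obstacle here. The only point that needs care is that \autoref{ass:Ainvertible} must hold, so that \autoref{thm:controllable-N} applies at every horizon; this assumption is part of the hypotheses.
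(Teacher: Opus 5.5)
Your proposal is correct. Your ``alternative route'' is exactly the paper's proof. The paper notes $G_{N+k}\geq G_N$, which your expansion via \autoref{lem:GN} and the positivity of $\cL$ justifies, and then applies \autoref{thm:controllable-N} twice.

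Your primary route instead goes through item (v) of \autoref{thm:controllable-N} and the monotonicity $V_k\subset V_{k+1}$. Your induction is the same one the paper gives later in \autoref{lem:V}(i), so you are in effect proving that lemma early. The two arguments are equally short. The Gramian route relies on the identity $G_N=\sum_{k=0}^N\cL^k(\hB\hB^{\top})$ and positivity of $\cL$. The subspace route is purely linear-algebraic and fits naturally with the later finite-step stabilization of $\{V_k\}$.

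Neither rank criterion is actually needed here. Extending $(u,z)$ by zero after time $N$ keeps the state at zero; this is the argument giving $\cK_N\subset\cK_{N+1}$ in the proof of \autoref{prop:null-controllable}. \autoref{prop:controllable-equivalent} then upgrades null controllability to exact controllability. Neither step uses invertibility of $A$, so the corollary holds even without \autoref{ass:Ainvertible}.
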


\begin{proof}
    By \autoref{thm:controllable-N}, $G_N$ is invertible.
    Note that $G_{N+k}\geq G_N$ for any $k\in\dbN$.
    Then $G_{N+k}$ is invertible and we finish the proof by \autoref{thm:controllable-N} again.
\end{proof}

\autoref{def:null-controllable-N} requires exact null controllability over a prescribed finite horizon. The following definition relaxes this requirement by only requiring that the state be driven to zero in some finite number of steps.

\begin{definition}
\label{def:null-controllable}
    System \eqref{eq:canonical} is {\it exactly null controllable} if, for every $y\in\dbR^n$, there exist $N=N(y)\in\dbN$ and $(u,z)\in l_{\dbF}^2(0,N;\dbR^m)\times l_{\dbF}^2(0,N;\dbR^n)$ such that $x_{N+1}=0\ \as$
\end{definition}

\begin{proposition}
\label{prop:null-controllable}
The following statements are equivalent.
    \begin{enumerate}[(i)]
        \item System \eqref{eq:canonical} is exactly null controllable.
        \item There exists $N\in\dbN$ such that system \eqref{eq:canonical} is exactly (null) controllable on $[0,N]$.
    \end{enumerate}
\end{proposition}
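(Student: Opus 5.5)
The implication (ii) $\implies$ (i) is immediate: if the system is exactly null controllable on $[0,N]$, then for every $y$ we may take $N(y)\equiv N$ in \autoref{def:null-controllable}. The content lies in (i) $\implies$ (ii). The idea is to use linearity to choose finitely many initial states whose horizons bound all the others. Then, by the monotonicity of the Gramians, we pass to a common horizon.

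For (i) $\implies$ (ii), let $e_1,\dots,e_n$ be the standard basis of $\dbR^n$. By (i), for each $i$ there exist $N_i\in\dbN$ and $(u^i,z^i)\in l_{\dbF}^2(0,N_i;\dbR^m)\times l_{\dbF}^2(0,N_i;\dbR^n)$ steering $x_0=e_i$ to $x_{N_i+1}=0$. Set $N:=\max_i N_i$. Extend each $(u^i,z^i)$ by zero on $[N_i+1,N]$. Since the state is already $0$ at time $N_i+1$ and $Ax+B\cdot 0+C\cdot 0+w\cdot 0=0$ when $x=0$, the extended state stays at $0$ up to time $N+1$. The extended controls are still predictable and square summable. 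Hence every $e_i$ is steered to zero on the common horizon $[0,N]$.

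Next we use linearity of \eqref{eq:canonical} in $(y,u,z)$. For $y=\sum_i a_ie_i$, the pair $(\sum_i a_iu^i,\sum_i a_iz^i)$ steers $y$ to $x_{N+1}=0$. Thus the system is exactly null controllable on $[0,N]$. By \autoref{prop:controllable-equivalent} (or \autoref{thm:controllable-N}), it is then also exactly controllable on $[0,N]$, which gives (ii).

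As an alternative route through \autoref{thm:controllable-N}, we could argue by contradiction. If no $G_N$ is invertible, the increasing sequence $\ker G_N$ stabilizes, so there is a nonzero $\eta$ with $\eta^\top\Phi_k\hB=0$ a.s. for all $k$. Then \autoref{lem:BSDE-express} applied to $y=\eta$ with its horizon $N(\eta)$ gives $|\eta|^2=0$, a contradiction. Note that \autoref{lem:BSDE-express} requires \autoref{ass:Ainvertible}, whereas the direct linearity argument does not.

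The only point requiring care is the zero extension. We must check that the equation with $x_k=0$ and $u_k=z_k=0$ is satisfied beyond $N_i$, and that the horizons in the two function spaces match. Both checks are routine, so I expect no real obstacle.
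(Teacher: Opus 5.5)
Your main argument is correct and is essentially the paper's proof: the paper defines the subspaces $\cK_N$, shows $\cK_N\subset\cK_{N+1}$ by the same zero extension, and takes $N_*=\max_i N_i$ over the standard basis, exactly as you do via linearity. In your optional alternative route there is a small slip: since $G_{N+1}\geq G_N$, the kernels $\ker G_N$ form a decreasing (not increasing) chain, but the conclusion still holds because a decreasing chain of nonzero subspaces of $\dbR^n$ stabilizes at a nonzero subspace, which gives a common $\eta\neq0$.
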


\begin{proof}
    Define 
    \begin{align}
    \label{eq:cK_N}
    \begin{split}
        \cK_N=\{y\in\dbR^n\ |\ &\text{there exists }(u,z)\in l_{\dbF}^2(0,N;\dbR^m)\times l_{\dbF}^2(0,N;\dbR^n)\\
        &\text{ such that the solution } x\text{ of \eqref{eq:canonical} satisfies } x_{N+1}=0\}.
    \end{split}
    \end{align}
    Then $\cK_N$ is a nonempty subspace of $\dbR^n$.
    Suppose that $y\in\cK_N$.
    Then there exists $(u,z)\in l_{\dbF}^2(0,N;\dbR^m)\times l_{\dbF}^2(0,N;\dbR^n)$ such that $x_{N+1}=0$.
    Let
    \begin{align*}
        \bar u_k=
        \begin{cases}
            u_k,\quad0\leq k\leq N,\\
            0,\quad k=N+1,
        \end{cases}
        \quad
        \bar z_k=
        \begin{cases}
            z_k,\quad0\leq k\leq N,\\
            0,\quad k=N+1.
        \end{cases}
    \end{align*}
    Then $(\bar u,\bar  z)\in l_{\dbF}^2(0,N+1;\dbR^m)\times l_{\dbF}^2(0,N+1;\dbR^n)$ and the corresponding state $\bar x$ satisfies $\bar x_{N+2}=0$.
    Hence, $y\in\cK_{N+1}$ and then $\cK_N\subset\cK_{N+1}$.
    
    (i) $\implies$ (ii).
    Let $e_1,\cdots,e_n$ be the standard basis of $\dbR^n$.
    Then there exist $N_1,\cdots,N_n\in\dbN$ such that $e_i\in\cK_{N_i}$ for any $i\in[1,n]$.
    Define $N_*=\max_{1\leq i\leq n}N_i$.
    Then $e_i\in\cK_{N_*}$ for any $i\in[1,n]$.
    Hence, $\cK_{N_*}=\dbR^n$ and then system \eqref{eq:canonical} is exactly null controllable on $[0,N_*]$.

    (ii) $\implies$ (i).
    Obviously.
    The proof is done.
\end{proof}

\begin{remark}
    Under \autoref{ass:Ainvertible}, for every $N\in\dbN$, the subspace $\cK_N$ introduced in the proof of \autoref{prop:null-controllable} coincides with the subspace $V_N$ defined by \eqref{eq:V}.
    Indeed, let $y\in\cK_N$.
    Then there exists a pair $(u,z)\in l_{\dbF}^2(0,N;\dbR^m)\times l_{\dbF}^2(0,N;\dbR^n)$ such that the solution $x$ of \eqref{eq:canonical} satisfies $x_0=y$ and $x_{N+1}=0$.
    By \autoref{lem:BSDE-express}, $y=\dbE\sum_{k=0}^N\Phi_k\hB u_k$.
    For any $\eta\in\ker G_N$, we have
    \begin{align*}
        0=\eta^{\top}G_N\eta=\sum_{k=0}^N\dbE|\hB^{\top}\Phi_k^{\top}\eta|^2.
    \end{align*}
    Hence, $\hB^{\top}\Phi_k^{\top}\eta=0\ \as$ for every $k\in[0,N]$, and therefore
    \begin{align*}
        \eta^{\top}y=\dbE\sum_{k=0}^N\eta^{\top}\Phi_k\hB u_k=0.
    \end{align*}
    Since $G_N$ is symmetric, it follows that $\cK_N\subset(\ker G_N)^{\perp}=\im G_N$.
    Conversely, let $y\in\im G_N$.
    Then there exists $g\in\dbR^n$ such that $y=G_Ng$.
    Using this $g$ in the construction \eqref{eq:construction}, the same computation as in the proof of \autoref{thm:controllable-N} shows that the resulting $(x,u,z)$ satisfies \eqref{eq:canonical}, with $x_0=y$ and $x_{N+1}=0$.
    Hence, $y\in\cK_N$, and therefore $K_N=\im G_N$.
    By \autoref{lem:G-R} and \autoref{lem:V-R}, 
    \begin{align*}
        \cK_N=\im G_N=\im R_N=V_N.
    \end{align*}
\end{remark}

\autoref{prop:null-controllable} reduces exact null controllability to exact null controllability over some finite horizon.
By \autoref{thm:controllable-N}, the latter is equivalent to $\dim V_N=n$ for some $N\in\mathbb N$.
Since the horizon $N$ is not prescribed in \autoref{def:null-controllable}, we seek a criterion independent of horizon.
For this purpose, we first investigate the monotonicity and finite-step stabilization of the sequence $\{V_k\}_{k\geq0}$.
The following lemma collects the required properties.

\begin{lemma}
\label{lem:V}
    Suppose $\{V_k\}_{k\geq0}$ is defined by \eqref{eq:V}.
    Then the following statements hold.
    \begin{enumerate}[(i)]
        \item For any $k\in\dbN_+$, $V_{k-1}\subset V_k$.
        \item Suppose that $V_k=V_{k+1}$.
        Then $V_k=V_{k+l}$ for any $l\in\dbN$.
        \item There exists $0\leq k\leq n-1$ such that $V_k=V_{k+l}$ for any $l\in\dbN$.
        \item $V_{n-1}=V_{n+l}$ for any $l\in\dbN$.
        Consequently, $V_{n-1}$ is $\hA$- and $\hC$-invariant.
    \end{enumerate}
\end{lemma}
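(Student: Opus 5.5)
The plan is to prove the four items in order by induction and a dimension count, working directly from the recursion \eqref{eq:V}, namely $V_{k+1}=V_0+\hA V_k+\hC V_k$.

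For (i), I will induct on $k$. The base case $V_0\subset V_1$ holds because $V_1=V_0+\hA V_0+\hC V_0\supset V_0$. For the inductive step, assume $V_{k-1}\subset V_k$. Then $\hA V_{k-1}\subset \hA V_k$ and $\hC V_{k-1}\subset\hC V_k$, so
\begin{align*}
V_k=V_0+\hA V_{k-1}+\hC V_{k-1}\subset V_0+\hA V_k+\hC V_k=V_{k+1}.
\end{align*}

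For (ii), the observation is that $V_{k+1}$ depends only on $V_k$, through the map $F(W):=V_0+\hA W+\hC W$. If $V_k=V_{k+1}$, then
\begin{align*}
V_{k+2}=F(V_{k+1})=F(V_k)=V_{k+1}.
\end{align*}
Induction on $l$ then gives $V_{k+l}=V_k$ for every $l\in\dbN$.

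For (iii), I will use a dimension-counting argument, which is the only step needing care. By (i), the integers $d_k:=\dim V_k$ are nondecreasing and bounded by $n$. By (ii), once two consecutive terms coincide, the sequence is constant from that index on. Suppose $V_k\subsetneq V_{k+1}$ for every $k=0,\dots,n-1$. Then $d_{k+1}\ge d_k+1$ along this range, so $d_n\ge d_0+n$. If $d_0\ge1$, this gives $d_n\ge n+1$, a contradiction. If $d_0=0$, then $\hB=0$, so $V_0=\{0\}$ and $V_k=\{0\}$ for all $k$ (induct with $F$). In that case $V_0=V_1$, again a contradiction. Hence some $k\le n-1$ satisfies $V_k=V_{k+1}$, and (ii) gives stabilization from that $k$ on.

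For (iv), take the $k\le n-1$ from (iii). Since $n-1\ge k$, we get $V_{n-1}=V_k=V_{n+l}$ for all $l\in\dbN$. In particular $V_n=V_{n-1}$, so
\begin{align*}
\hA V_{n-1}\subset V_0+\hA V_{n-1}+\hC V_{n-1}=V_n=V_{n-1},
\end{align*}
and likewise $\hC V_{n-1}\subset V_{n-1}$. This is the claimed $\hA$- and $\hC$-invariance.

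The main obstacle is small: in (iii) one must handle the degenerate case $\hB=0$, and then use the strict-increase count to cap the stabilization index at $n-1$ rather than $n$.
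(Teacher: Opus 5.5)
Your proposal is correct and follows the paper's proof essentially step for step. It uses induction for (i), the observation that $V_{k+1}$ is determined by $V_k$ for (ii), a strict dimension count with the case $\hB=0$ split off for (iii), and the inclusion $\hA V_{n-1}\subset V_n=V_{n-1}$ (and likewise for $\hC$) for (iv).
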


\begin{proof}
    (i). 
    Note that $V_0\subset V_1$.
    Suppose that $V_{k-1}\subset V_k$.
    Then 
    \begin{align*}
    V_k=V_0+\hA V_{k-1}+\hC V_{k-1}\subset V_0+\hA V_k+\hC V_k=V_{k+1}.
\end{align*}

(ii). 
Note that 
\begin{align*}
    V_{k+2}=V_0+\hA V_{k+1}+\hC V_{k+1}=V_0+\hA V_k+\hC V_k=V_{k+1}.
\end{align*}
Hence, we have $V_k=V_{k+1}=V_{k+2}=\cdots$.

(iii).
If $\hB=0$, then $\dim V_k=0$ for any $k\in\dbN$.
Now assume $\hB\neq0$.
Suppose that $1\leq\dim V_0<\dim V_1<\cdots<\dim V_{n-1}<\dim V_n$.
Then $\dim V_n\geq n+1$, a contradiction.
Hence, there exists $k\in[0,n-1]$ such that $V_k=V_{k+1}$.
By (ii), $V_k=V_{k+l}$ for every $l\in\dbN$.

(iv).
$V_{n-1}=V_{n+l}$ for any $l\in\dbN$ follows from (ii) and (iii) directly.
By $\hA V_{n-1}\subset V_n=V_{n-1}$ and $\hC V_{n-1}\subset V_n=V_{n-1}$, $V_{n-1}$ is $\hA$- and $\hC$-invariant.
The proof is done.
\end{proof}

\begin{remark}
    By \autoref{thm:controllable-N} and \autoref{lem:V}, for any $N\geq n-1$, system \eqref{eq:canonical} is exactly controllable on $[0,N]$ if and only if $\dim V_{n-1}=n$.
    Thus, once $N\geq n-1$, the exact controllability criterion is independent of the horizon.
    However, this conclusion does not hold when $N<n-1$.
    Indeed, let 
    \begin{align*}
        A=
        \begin{pmatrix}
            0&1\\1&0
        \end{pmatrix},\quad
        B=
        \begin{pmatrix}
            0\\1
        \end{pmatrix},\quad
        C=
        \begin{pmatrix}
            0&0\\0&0
        \end{pmatrix}.
    \end{align*}
    Then $\hA=A,\hB=(-1,0)^{\top},\hC=0$.
    Consequently, $\rank R_0=\rank\hB=1$, whereas $\rank R_1=\rank(\hB,\hA\hB,\hC\hB)=2$.
    Therefore, by \autoref{thm:controllable-N}, system \eqref{eq:canonical} is not exactly controllable on $[0,0]$, but it is exactly controllable on $[0,1]$.
    This example also shows that the bound $n-1$ in \autoref{lem:V} (iv) is sharp.
\end{remark}

\begin{theorem}
\label{thm:null-controllable}
    Suppose that \autoref{ass:Ainvertible} holds.
    Then the following statements are equivalent.
    \begin{enumerate}[(i)]
        \item System \eqref{eq:canonical} is exactly null controllable.
        \item There exists $N\in\dbN$ such that system \eqref{eq:canonical} is exactly (null) controllable on $[0,N]$.
        \item $\rank R_{n-1}=\dim V_{n-1}=n$.
    \end{enumerate}
\end{theorem}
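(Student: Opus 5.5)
The proof combines results already established, so the plan is to close a cycle of implications. The equivalence (i) $\iff$ (ii) is exactly \autoref{prop:null-controllable}, and that proposition does not even need \autoref{ass:Ainvertible}. It remains to link (ii) with (iii). Note first that $\rank R_{n-1}=\dim V_{n-1}$ holds unconditionally by \autoref{lem:V-R}, since $V_{n-1}=\im R_{n-1}$. So (iii) reduces to the single condition $\dim V_{n-1}=n$.

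For (iii) $\implies$ (ii), take $N=n-1$. Since $\dim V_{n-1}=n$, \autoref{thm:controllable-N} (the implication (v) $\implies$ (i)/(ii)) shows that system \eqref{eq:canonical} is exactly controllable, and exactly null controllable, on $[0,n-1]$.

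For (ii) $\implies$ (iii), suppose the system is exactly null controllable on $[0,N]$ for some $N\in\dbN$. By \autoref{thm:controllable-N}, $\dim V_N=n$. We then split into two cases.
\begin{enumerate}[(a)]
\item If $N\le n-1$, then by the monotonicity in \autoref{lem:V} (i), $V_N\subset V_{n-1}\subset\dbR^n$, so $\dim V_{n-1}=n$.
\item If $N\ge n$, then \autoref{lem:V} (iv) gives $V_N=V_{n-1}$, and hence $\dim V_{n-1}=n$.
\end{enumerate}

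There is no serious obstacle here. The only point requiring care is the horizon-independence, namely that controllability at an arbitrary (possibly large) $N$ forces it already at $n-1$. This is exactly the finite-step stabilization of $\{V_k\}$ in \autoref{lem:V} (iii)–(iv), which has already been proved. Everything else is bookkeeping with \autoref{thm:controllable-N}, which requires \autoref{ass:Ainvertible}, and the identification $\im R_k=V_k$.
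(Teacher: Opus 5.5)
Your proposal is correct and follows the paper's proof essentially step for step. Like the paper, you get (i)$\iff$(ii) from Proposition~\ref{prop:null-controllable}. For (ii)$\implies$(iii), you obtain $\dim V_N=n$ from Theorem~\ref{thm:controllable-N} and then split into the cases $N\le n-1$ (monotonicity) and $N\ge n$ (stabilization), both via Lemma~\ref{lem:V}. For (iii)$\implies$(ii), you take $N=n-1$. Your side remarks are also accurate: $\rank R_{n-1}=\dim V_{n-1}$ holds unconditionally by Lemma~\ref{lem:V-R}, and Proposition~\ref{prop:null-controllable} does not use the invertibility of $A$.
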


\begin{proof}
    (i) $\iff$ (ii) follows from \autoref{prop:null-controllable}.

    (ii) $\implies$ (iii).
    By \autoref{thm:controllable-N}, $\dim V_N=n$.
    If $N\leq n-1$, then $V_N\subset V_{n-1}=\dbR^n$ by \autoref{lem:V} (i).
    If $N\geq n$, then $V_{n-1}=V_N=\dbR^n$ by \autoref{lem:V} (iv).
    Thus, (iii) holds by \autoref{lem:V-R}.

    (iii) $\implies$ (ii).
    By \autoref{thm:controllable-N}, system \eqref{eq:canonical} is exactly (null) controllable on $[0,n-1]$.
    The proof is done.
\end{proof}

To prepare for the controllability decomposition, we first give the notion of matrix words generated by matrices $A$ and $C$.
For an integer $k\geq0$, a word of length $k$ over $\{A,C\}$ is a matrix of the form
\begin{align*}
    \cX=X_1X_2\cdots X_k,\quad X_i\in\{A,C\},\quad i=1,\cdots,k.
\end{align*}
When $k=0$, the empty product is understood as the identity matrix $I$.
Let $\cW_k(A,C)$ denote the set of all words of length at most $k$ over $\{A,C\}$.

\begin{lemma}
\label{lem:R-word}
    $\im R_k=\Span\{\im(\cX\hB):\cX\in\cW_k(\hA,\hC)\}$.
\end{lemma}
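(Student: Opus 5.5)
We argue by induction on $k$, using \autoref{lem:V-R} to write $\im R_k=V_k$ and working with the recursion \eqref{eq:V} rather than with the block matrices $R_k$ directly. Set $W_k:=\Span\{\im(\cX\hB):\cX\in\cW_k(\hA,\hC)\}$; the claim is $V_k=W_k$ for all $k\in\dbN$. For $k=0$ the only word of length at most $0$ is the empty word $I$, so $W_0=\im\hB=V_0$.

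For the inductive step, assume $V_k=W_k$. By \eqref{eq:V},
\begin{align*}
    V_{k+1}=\im\hB+\hA W_k+\hC W_k .
\end{align*}
Since matrix multiplication is linear, $\hA W_k=\Span\{\im(\hA\cX\hB):\cX\in\cW_k(\hA,\hC)\}$, and likewise for $\hC$. The words of length at most $k+1$ are exactly the empty word together with the words $\hA\cX$ and $\hC\cX$ with $\cX\in\cW_k(\hA,\hC)$: any nonempty word of length $\le k+1$ factors as its first letter times a word of length $\le k$, and conversely. Hence the three summands together span precisely $\{\im(\cX\hB):\cX\in\cW_{k+1}(\hA,\hC)\}$, so $V_{k+1}=W_{k+1}$.

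The only point needing care is this set equality $\cW_{k+1}=\{I\}\cup\hA\cW_k\cup\hC\cW_k$, in particular that words of length strictly less than $k+1$ are produced. This holds because $I\in\cW_k$ contributes $\hA\hB$ and $\hC\hB$, and shorter words are obtained by prefixing a letter to shorter words in $\cW_k$. It would also follow from the monotonicity $V_k\subset V_{k+1}$ in \autoref{lem:V}(i), although it is not needed here. Everything else is bookkeeping with spans, so no real obstacle is expected.
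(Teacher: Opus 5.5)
Your proposal is correct and follows essentially the paper's proof: induction on $k$, splitting $\cW_{k+1}(\hA,\hC)$ into the empty word and the words $\hA\cX$, $\hC\cX$ with $\cX\in\cW_k(\hA,\hC)$, and matching this with the recursion $\im R_{k+1}=\im\hB+\hA\im R_k+\hC\im R_k$. The only difference is cosmetic: you phrase the step through $V_k$ via \autoref{lem:V-R}, whereas the paper works with $\im R_k$ directly.
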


\begin{proof}
    For $k=0$, $\Span\{\im\hB\}=\im\hB=\im R_0$.
    Suppose that $\im R_k=\Span\{\im(\cX\hB):\cX\in\cW_k(\hA,\hC)\}$.
    Then  
    \begin{align*}
        &\Span\{\im(\cX\hB):\cX\in\cW_{k+1}(\hA,\hC)\}\\
        =&\Span\{\im\hB,\im(\hA\cX\hB),\im(\hC\cX\hB):\cX\in\cW_k(\hA,\hC)\}\\
        =&\Span\{\im\hB,\hA\im(\cX\hB),\hC\im(\cX\hB):\cX\in\cW_k(\hA,\hC)\}\\
        =&\Span\{\im\hB,\hA\im R_k,\hC\im R_k\}\\
        =&\im R_{k+1}.
    \end{align*}
    By induction, we finish the proof.
\end{proof}

Based on this representation and the invariance of $V_{n-1}$ under $\hA$ and $\hC$, we obtain the following controllability decomposition.

\begin{theorem}[Controllability decomposition]
\label{thm:controllability decomposition}
    Suppose that \autoref{ass:Ainvertible} holds and $0<k\triangleq\dim V_{n-1}<n$.
    Then the following results hold.
    \begin{enumerate}[(i)]
        \item There exists an orthogonal matrix $P\in\dbR^{n\times n}$ such that 
    \begin{align}
    \label{eq:decomposition}
        P^{\top}\hA P=
        \begin{pmatrix}
            \hA_{11}&\hA_{12}\\
            0&\hA_{22}
        \end{pmatrix},
        \quad
        P^{\top}\hC P=
        \begin{pmatrix}
            \hC_{11}&\hC_{12}\\
            0&\hC_{22}
        \end{pmatrix},
        \quad
        P^{\top}\hB=
        \begin{pmatrix}
            \hB_1\\0
        \end{pmatrix},
    \end{align}
    where $\hA_{11},\hC_{11}\in\dbR^{k\times k}$, $\hA_{12},\hC_{12}\in\dbR^{k\times(n-k)}$, $\hA_{22},\hC_{22}\in\dbR^{(n-k)\times(n-k)}$ and $\hB_1\in\dbR^{k\times m}$.
    Moreover,
    \begin{align}
    \label{eq:decompostion-original-matrix}
        P^{\top}AP=
        \begin{pmatrix}
            A_{11}&A_{12}\\
            0&A_{22}
        \end{pmatrix},\quad
        P^{\top}CP=
        \begin{pmatrix}
            C_{11}&C_{12}\\
            0&C_{22}
        \end{pmatrix},\quad
        P^{\top}B=
        \begin{pmatrix}
            B_1\\0
        \end{pmatrix},
    \end{align}
    where 
    \begin{align}
    \label{eq:A11-hA11}
        \begin{cases}
            A_{11}=\hA_{11}^{-1},\ A_{12}=-\hA_{11}^{-1}\hA_{12}\hA_{22}^{-1},\ A_{22}=\hA_{22}^{-1},\\
            C_{11}=-A_{11}\hC_{11},\ C_{12}=-A_{11}\hC_{12}-A_{12}\hC_{22},\ C_{22}=-A_{22}\hC_{22},\\
            B_1=-A_{11}\hB_1
        \end{cases}
    \end{align}
    \item Let $S_0=\hB_1$ and $S_{j+1}=(\hB_1,\hA_{11}S_j,\hC_{11}S_j)$ for $j\in\dbN$.
    Then $\rank S_{k-1}=k$.
    \item The system 
    \begin{align}
    \label{eq:subsystem-1}
        \begin{cases}
            y_{1,k+1}=A_{11}y_{1,k}+B_1u_k+C_{11}v_{1,k}+w_kv_{1,k},\\
            y_{1,0}\in\dbR^k.
        \end{cases}
    \end{align}
    is exactly null controllable.
    \end{enumerate}
\end{theorem}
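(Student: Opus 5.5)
The plan is to build $P$ from an orthonormal basis adapted to $V_{n-1}$, read off the block structure from invariance, and then transfer the rank condition to the subsystem through the words characterization of $\im R_k$.

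\emph{Step 1: choice of $P$.} Choose an orthonormal basis $p_1,\dots,p_k$ of $V_{n-1}$ and complete it to an orthonormal basis $p_1,\dots,p_n$ of $\dbR^n$. Set $P=(p_1,\dots,p_n)$. By \autoref{lem:V} (iv), $V_{n-1}$ is $\hA$- and $\hC$-invariant, so the lower-left $(n-k)\times k$ blocks of $P^{\top}\hA P$ and $P^{\top}\hC P$ vanish. Since $\im\hB=V_0\subset V_{n-1}$ by \autoref{lem:V} (i), the last $n-k$ rows of $P^{\top}\hB$ vanish. This gives \eqref{eq:decomposition}.

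\emph{Step 2: the original coefficients.} Under \autoref{ass:Ainvertible}, $P^{\top}\hA P$ is invertible and block upper triangular, so $\hA_{11}$ and $\hA_{22}$ are invertible. Inverting the block triangular matrix gives $P^{\top}AP=(P^{\top}\hA P)^{-1}$ with the blocks $A_{11},A_{12},A_{22}$ listed in \eqref{eq:A11-hA11}. Next, $C=-A\hC$ and $B=-A\hB$ by \eqref{eq:hA-A}, so
\[
P^{\top}CP=-(P^{\top}AP)(P^{\top}\hC P),\qquad P^{\top}B=-(P^{\top}AP)(P^{\top}\hB).
\]
Multiplying the block triangular matrices yields the remaining formulas in \eqref{eq:A11-hA11} and the zero blocks in \eqref{eq:decompostion-original-matrix}. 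This is routine bookkeeping.

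\emph{Step 3: the rank of $S_{k-1}$.} By induction on $j$ I will show $P^{\top}R_j=\begin{pmatrix}S_j\\0\end{pmatrix}$ up to the ordering of columns. The base case is $P^{\top}\hB$. For the induction step, use
\[
P^{\top}\hA R_j=(P^{\top}\hA P)(P^{\top}R_j)=\begin{pmatrix}\hA_{11}S_j\\0\end{pmatrix},
\]
and the same computation for $\hC$. Hence $\rank S_j=\rank R_j=\dim V_j$ for all $j$, and in particular $\rank S_{n-1}=\dim V_{n-1}=k$.

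The main obstacle is passing from index $n-1$ down to $k-1$. Apply \autoref{lem:V} (iii) to the $k$-dimensional triple $(\hA_{11},\hB_1,\hC_{11})$. Let $W_j=\im S_j\subset\dbR^k$; these subspaces satisfy the same recursion \eqref{eq:V}, and the dimension argument of \autoref{lem:V} (ii)--(iii) holds verbatim in $\dbR^k$. It shows that the $W_j$ stabilize by index $k-1$, so $W_{k-1}=W_{n-1}=\dbR^k$. Since $k>0$ forces $\hB\neq0$, the degenerate case $\hB_1=0$ cannot occur. Equivalently, one could argue with words via \autoref{lem:R-word}.

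\emph{Step 4: exact null controllability of the subsystem.} Since $\hA_{11}$ is invertible, subsystem \eqref{eq:subsystem-1} satisfies \autoref{ass:Ainvertible} in dimension $k$. Its backward-form coefficients are $A_{11}^{-1}=\hA_{11}$, $-A_{11}^{-1}B_1=\hB_1$ and $-A_{11}^{-1}C_{11}=\hC_{11}$, so its Kalman matrices are exactly the $S_j$. By Step 3, $\rank S_{k-1}=k$, and \autoref{thm:null-controllable} (iii)$\Rightarrow$(i) applied in dimension $k$ yields exact null controllability of \eqref{eq:subsystem-1}.
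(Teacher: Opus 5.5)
Your proposal is correct and follows the paper's proof essentially step by step. It uses the same orthonormal basis adapted to the invariant subspace $V_{n-1}$, the same block-triangular inversion to recover $A$, $B$, $C$, and the same reduction of (iii) to \autoref{thm:null-controllable} in dimension $k$. The only cosmetic differences are that you transfer the rank by the direct induction $P^{\top}R_j=\begin{pmatrix}S_j\\0\end{pmatrix}$ rather than through the word representation of \autoref{lem:R-word}, and that you explicitly justify the stabilization step $\im S_{k-1}=\im S_{n-1}$, which the paper leaves implicit.
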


\begin{proof}
    Choose an orthonormal basis $\{e_1,\cdots,e_k\}$ of $V_{n-1}$ and extend it to an orthonormal basis $\{e_1,\cdots,e_n\}$ of $\dbR^n$.
    Let $P\triangleq(e_1,\cdots,e_n)$.
    By \autoref{lem:V}, $V_{n-1}$ is $\hA$- and $\hC$-invariant.
    Then
    \begin{align*}
        P^{\top}\hA P=
        \begin{pmatrix}
            \hA_{11}&\hA_{12}\\
            0&\hA_{22}
        \end{pmatrix},
        \quad
        P^{\top}\hC P=
        \begin{pmatrix}
            \hC_{11}&\hC_{12}\\
            0&\hC_{22}
        \end{pmatrix},
    \end{align*}
    where $\hA_{11},\hC_{11}\in\dbR^{k\times k}$.
    Also, since $\im\hB\subset V_{n-1}$, $P^{\top}\hB=(\hB_1^{\top},0)^{\top}$
    for some $\hB_1\in\dbR^{k\times m}$.
    Hence,
    \begin{align*}
            &P^{\top}AP=(P^{\top}\hA P)^{-1}=\begin{pmatrix}
            \hA_{11}&\hA_{12}\\
            0&\hA_{22}
        \end{pmatrix}^{-1}
        =
        \begin{pmatrix}
            \hA_{11}^{-1}&-\hA_{11}^{-1}\hA_{12}\hA_{22}^{-1}\\
            0&\hA_{22}^{-1}
        \end{pmatrix},\\
        &P^{\top}CP=-P^{\top}APP^{\top}\hC P=-
        \begin{pmatrix}
            A_{11}&A_{12}\\
            0&A_{22}
        \end{pmatrix}
        \begin{pmatrix}
            \hC_{11}&\hC_{12}\\
            0&\hC_{22}
        \end{pmatrix}
        =-
        \begin{pmatrix}
            A_{11}\hC_{11}&A_{11}\hC_{12}+A_{12}\hC_{22}\\
            0&A_{22}\hC_{22}
        \end{pmatrix},\\
        &P^{\top}B=-P^{\top}APP^{\top}\hB=-
        \begin{pmatrix}
            A_{11}&A_{12}\\
            0&A_{22}
        \end{pmatrix}
        \begin{pmatrix}
            \hB_1\\0
        \end{pmatrix}
        =-
        \begin{pmatrix}
          A_{11}\hB_1\\0  
        \end{pmatrix}
    \end{align*}
    This proves (i).
    Moreover, by \autoref{lem:V-R}, \autoref{lem:R-word}, and \autoref{lem:V},
    \begin{align*}
        k&=\dim V_{n-1}=\dim(P^{\top}V_{n-1})=\dim(P^{\top}\im R_{n-1})\\
        &=\dim\Span\{\im(P^{\top}\cX\hB):\cX\in\cW_{n-1}(\hA,\hC)\}\\
        &=\dim\Span\{\im(P^{\top}\cX PP^{\top}\hB):\cX\in\cW_{n-1}(\hA,\hC)\}\\
        &=\dim\Span\{\im(\cX P^{\top}\hB):\cX\in\cW_{n-1}(P^{\top}\hA P,P^{\top}\hC P)\}\\
        &=\dim\Span\{\im(\cX \hB_1):\cX\in \cW_{n-1}(\hA_{11},\hC_{11})\}\\
        &=\dim\im S_{n-1}=\dim\im S_{k-1}.
    \end{align*}
    Hence, (ii) holds.
    Consequently, (iii) holds by \eqref{eq:A11-hA11} and \autoref{thm:null-controllable}.
    The proof is done.
\end{proof}

To derive a Hautus-type controllability criterion, we use the following finite-dimensional version of the Krein–Rutman theorem on $\dbS^n$ (see \citealp[Theorem 3.1]{vandergraft1968spectral}).
\begin{lemma}
\label{lem:K-R}
    Let $K$ be a solid, closed, pointed, and convex cone in $\dbS^n$.
    Suppose that $\G:\dbS^n\to\dbS^n$ is a linear operator satisfying $\G(K)\subset K$.
    Then there exists $0\neq P\in K$ such that $\G(P)=r(\G)P$, where $r(\G)$ is the spectral radius of $\G$.
\end{lemma}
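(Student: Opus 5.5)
We prove the lemma by a perturbation argument. First, we produce an eigenmatrix in the interior of $K$ for a strictly positive perturbation of $\G$, using Brouwer's fixed point theorem on a compact base of the cone. Second, we let the perturbation vanish, and compare spectral radii to identify the limiting eigenvalue as $r(\G)$.

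\emph{Step 1 (a compact base).} Since $K$ is closed, convex and pointed in the finite-dimensional space $\dbS^n$, its dual cone $K^*=\{\varphi:\varphi(P)\geq0\ \forall P\in K\}$ is solid. We fix $\varphi$ in the interior of $K^*$, so that $\varphi(P)>0$ for every $0\neq P\in K$. By compactness of $\{P\in K:|P|=1\}$, there is $c_0>0$ with $\varphi(P)\geq c_0|P|$ on $K$. Hence the base $\cB=\{P\in K:\varphi(P)=1\}$ is nonempty, compact and convex. The same inequality gives normality of $K$: if $0\leq_K X\leq_K Y$, then $|X|\leq c_0^{-1}\varphi(X)\leq c_0^{-1}\varphi(Y)\leq c_0^{-1}|\varphi|\,|Y|$.

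\emph{Step 2 (perturbed eigenproblem).} We fix $E$ in the interior of $K$, which exists because $K$ is solid. For $\varepsilon>0$ we set $\G_\varepsilon(P)=\G(P)+\varepsilon\varphi(P)E$. For $P\in\cB$ we have $\G_\varepsilon(P)\in\G(P)+\varepsilon E$, which lies in the interior of $K$; in particular $\varphi(\G_\varepsilon(P))>0$. The map $P\mapsto\G_\varepsilon(P)/\varphi(\G_\varepsilon(P))$ is therefore a continuous self-map of $\cB$. Brouwer's theorem gives $P_\varepsilon\in\cB$ with $\G_\varepsilon(P_\varepsilon)=\mu_\varepsilon P_\varepsilon$, where $\mu_\varepsilon=\varphi(\G_\varepsilon P_\varepsilon)>0$. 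Moreover $P_\varepsilon=\mu_\varepsilon^{-1}\G_\varepsilon(P_\varepsilon)$ lies in the interior of $K$.

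\emph{Step 3 (spectral comparison; the main obstacle).} We need $r(\G)\leq r(\G_\varepsilon)\leq\mu_\varepsilon$.

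For the upper bound, $P_\varepsilon$ is interior, so for every $Q\in K$ there is $c>0$ with $Q\leq_K cP_\varepsilon$. Positivity of $\G_\varepsilon$ then gives $0\leq_K\G_\varepsilon^k Q\leq_K c\mu_\varepsilon^kP_\varepsilon$. By normality, $|\G_\varepsilon^kQ|\leq C\mu_\varepsilon^k$. Since $K$ is solid, $K-K=\dbS^n$, so this bound extends to all of $\dbS^n$ and yields $\|\G_\varepsilon^k\|\leq C'\mu_\varepsilon^k$, hence $r(\G_\varepsilon)\leq\mu_\varepsilon$.

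For the lower bound, $\G_\varepsilon-\G$ maps $K$ into $K$. Induction gives $0\leq_K\G^kQ\leq_K\G_\varepsilon^kQ$ for all $Q\in K$. Normality and $K-K=\dbS^n$ then give $\|\G^k\|\leq C''\|\G_\varepsilon^k\|$, and Gelfand's formula yields $r(\G)\leq r(\G_\varepsilon)$.

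\emph{Step 4 (limit).} We have $\mu_\varepsilon\leq|\varphi|\sup_{P\in\cB}|\G_\varepsilon P|$, which is bounded for $\varepsilon\leq1$, and $\cB$ is compact. So along a subsequence $\varepsilon\to0$ we get $P_\varepsilon\to P\in\cB$ and $\mu_\varepsilon\to\mu\geq r(\G)$. Passing to the limit in the eigen-equation gives $\G(P)=\mu P$ with $0\neq P\in K$, since $\varphi(P)=1$. Because $P$ is an eigenvector, $\mu$ is an eigenvalue of $\G$, so $\mu\leq r(\G)$. Hence $\mu=r(\G)$, which proves the lemma.
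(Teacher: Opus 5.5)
Your proof is correct. It is necessarily a different route from the paper's, because the paper gives no proof of this lemma: it quotes the finite-dimensional Krein--Rutman theorem from Vandergraft (1968, Theorem~3.1). Standard proofs of that theorem are spectral, working with the Jordan structure of $\G$ or with the $K$-positive resolvent $(tI-\G)^{-1}$ as $t\downarrow r(\G)$.

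You give a self-contained, elementary argument instead:
\begin{itemize}
\item pointedness and closedness give a strictly positive functional $\varphi\in\operatorname{int}K^*$, hence a compact convex base and normality of $K$;
\item Brouwer's theorem applied to the perturbation $\G_\varepsilon=\G+\varepsilon\varphi(\cdot)E$ produces an interior eigenvector;
\item the two domination arguments together with Gelfand's formula give $r(\G)\le r(\G_\varepsilon)\le\mu_\varepsilon$;
\item a compactness limit combined with $\mu\le r(\G)$ forces $\mu=r(\G)$.
\end{itemize}

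What your route buys: each of the four cone hypotheses is used visibly and exactly where needed. Nothing specific to $\dbS^n$ is used, so the argument applies verbatim to the cone $\cC_S$ inside the subspace $V_S$. That is where the paper actually invokes the lemma, in the proof of (iii)$\Rightarrow$(ii) of Proposition~\ref{prop:positive-operator}. The citation is shorter, and the spectral proof behind it yields somewhat more structural information about the peripheral spectrum, none of which this paper needs.

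Two small points to tidy up.
\begin{itemize}
\item In Step~2, ``$\G_\varepsilon(P)\in\G(P)+\varepsilon E$'' should be an equality. The interiority claim rests on $K+\operatorname{int}K\subset\operatorname{int}K$.
\item In Step~3, the passage from bounds for $Q\in K$ to operator-norm bounds should be made uniform. One way is to fix a basis $Q_1,\dots,Q_d$ of $\dbS^n$ contained in $K$ (possible since $K$ is solid) and expand $X=\sum_i a_iQ_i$ with $|a_i|\le c|X|$. This makes $C'$ and $C''$ genuine constants independent of $X$.
\end{itemize}
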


\begin{theorem}
\label{thm:controllable-hautus}
    Under \autoref{ass:Ainvertible}, the following statements are equivalent.
    \begin{enumerate}[(i)]
        \item System \eqref{eq:canonical} is exactly null controllable.
        \item For any eigenpair $(\lambda,H)\in[0,\infty)\times\bar\dbS_+^n\setminus\{0\}$ of $\cL^*$, we have $\hB^{\top}H\neq0$.
    \end{enumerate}
\end{theorem}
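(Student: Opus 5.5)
The plan is to prove both implications, using \autoref{thm:null-controllable} (exact null controllability iff $\dim V_{n-1}=n$), the controllability decomposition of \autoref{thm:controllability decomposition}, and the Krein--Rutman lemma \autoref{lem:K-R}.

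\emph{(i) $\implies$ (ii).} Suppose system \eqref{eq:canonical} is exactly null controllable, and let $(\lambda,H)$ be an eigenpair of $\cL^*$ with $0\neq H\in\bar\dbS_+^n$ and $\hB^{\top}H=0$; we seek a contradiction. I would show by induction on $k$ that $H\cX\hB=0$ for every word $\cX$ over $\{\hA,\hC\}$. Suppose $H\cX\hB=0$ for all words of length at most $j$. Take such a word $\cX$. Then
\begin{align*}
\hB^{\top}\cX^{\top}\cL^*(H)\cX\hB=\lambda\hB^{\top}\cX^{\top}H\cX\hB=0.
\end{align*}
Expanding $\cL^*(H)=\hA^{\top}H\hA+\hC^{\top}H\hC$, both terms are positive semidefinite. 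Hence $H^{1/2}\hA\cX\hB=0$ and $H^{1/2}\hC\cX\hB=0$, which gives $H\hA\cX\hB=H\hC\cX\hB=0$. By \autoref{lem:R-word} and \autoref{lem:V-R}, $HV_{n-1}=0$. Since $V_{n-1}=\dbR^n$, this forces $H=0$, a contradiction. This direction is routine.

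\emph{(ii) $\implies$ (i).} Argue by contraposition: assume $k:=\dim V_{n-1}<n$. If $k=0$, then $\hB=0$. Applying \autoref{lem:K-R} to $\cL^*$ on the cone $\bar\dbS_+^n$ (solid, closed, pointed, convex, and invariant under $\cL^*$) gives a nonzero eigenmatrix $H\geq0$ with eigenvalue $r(\cL^*)\ge0$, and $\hB^{\top}H=0$ holds trivially. If $0<k<n$, use the decomposition \eqref{eq:decomposition}. Define $\cL_{22}^*(Q)=\hA_{22}^{\top}Q\hA_{22}+\hC_{22}^{\top}Q\hC_{22}$ on $\dbS^{n-k}$. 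This is a positive operator, so \autoref{lem:K-R} yields $0\neq Q\in\bar\dbS_+^{n-k}$ with $\cL_{22}^*(Q)=\lambda Q$, where $\lambda=r(\cL_{22}^*)\geq0$. Set
\begin{align*}
H=P\begin{pmatrix}0&0\\0&Q\end{pmatrix}P^{\top}.
\end{align*}
By the block upper-triangular structure, $P^{\top}\cL^*(H)P$ has zero $(1,1)$ and off-diagonal blocks: for example, $(\hA^{\top}\text{-block})$ gives $\begin{pmatrix}\hA_{11}^{\top}&0\\ \hA_{12}^{\top}&\hA_{22}^{\top}\end{pmatrix}\begin{pmatrix}0&0\\0&Q\end{pmatrix}\begin{pmatrix}\hA_{11}&\hA_{12}\\0&\hA_{22}\end{pmatrix}=\begin{pmatrix}0&0\\0&\hA_{22}^{\top}Q\hA_{22}\end{pmatrix}$. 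So $\cL^*(H)=\lambda H$. Moreover, $\hB^{\top}H=(\hB_1^{\top},0)\,\mathrm{diag}(0,Q)P^{\top}=0$. This contradicts (ii).

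The main obstacle is this second direction, namely producing a genuine positive semidefinite eigenmatrix rather than merely a complex eigenvector. This is exactly where \autoref{lem:K-R} is needed, together with the check that $\bar\dbS_+^{n-k}$ satisfies its hypotheses and that the block-triangular form makes the embedded $Q$ an eigenmatrix of the full operator $\cL^*$.
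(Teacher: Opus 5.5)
Your proof is correct. For (ii)$\implies$(i) you follow the paper's argument essentially verbatim. Krein--Rutman handles the case $\hB=0$. Otherwise, the decomposition \eqref{eq:decomposition}, a Krein--Rutman eigenmatrix $Q$ of the $(2,2)$-block operator, and the embedding $H=P\diag(0,Q)P^{\top}$ produce the forbidden eigenpair.

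For (i)$\implies$(ii) your route differs. The paper works with the Gramian. It takes $G_N>0$ from \autoref{thm:null-controllable} and \autoref{thm:controllable-N}, and uses the duality identity $\lan G_N,H\ran=\sum_{k=0}^N\lambda^k\tr(\hB^{\top}H\hB)$. This sum vanishes when $\hB^{\top}H=0$, which contradicts $\lan G_N,H\ran\geq\lambda_{\min}(G_N)\tr(H)>0$. You instead propagate $H\cX\hB=0$ along words, using that $\hA^{\top}H\hA$ and $\hC^{\top}H\hC$ are separately positive semidefinite, and finish with the rank condition $V_{n-1}=\dbR^n$.

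The paper's version is a one-line consequence of the Gramian machinery already in place. Yours is purely algebraic (no Gramian, no trace inequality) and proves more. With no controllability hypothesis at all, every $H\in\bar\dbS_+^n$ with $\cL^*(H)=\lambda H$ and $\hB^{\top}H=0$ satisfies $HV_{n-1}=0$, i.e.\ $\im H\subset V_{n-1}^{\perp}$. This is the direct analogue of the classical PBH fact that a left eigenvector annihilating $B$ is orthogonal to the reachable subspace. It would also give $\bar H_{11}=0$ and $\bar H_{12}=0$ immediately in the proof of \autoref{thm:stabilizable-hautus}, without re-applying the present theorem to the subsystem \eqref{eq:subsystem-1}.
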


\begin{proof}
    (i) $\implies$ (ii).
    Suppose that system \eqref{eq:canonical} is exactly null controllable.
    By \autoref{thm:null-controllable} and \autoref{thm:controllable-N}, $G_N>0$ for some $N\in\dbN$.
    Let $\cL^*(H)=\lambda H$ with $(\lambda,H)\in[0,\infty)\times\bar\dbS_+^n\setminus\{0\}$.
    Then, by \autoref{lem:GN},
    \begin{align*}
        \lan G_N,H\ran=\left\lan\sum_{k=0}^N\cL^k(\hB\hB^{\top}),H\right\ran=\sum_{k=0}^N\lan\hB\hB^{\top},(\cL^*)^k(H)\ran=\sum_{k=0}^N\lambda^k\lan\hB^{\top},\hB^{\top}H\ran.
    \end{align*}
    If $\hB^{\top}H=0$, then 
    \begin{align*}
        0=\lan G_N,H\ran=\tr(G_NH)\geq\lambda_{\min}(G_N)\tr(H)>0,
    \end{align*}
    which is impossible.
    Hence, $\hB^{\top}H\neq0$.

    (ii) $\implies$ (i).
    Applying \autoref{lem:K-R} to $K=\bar\dbS_+^n$ and $\G=\cL^*$, and then using (ii), we obtain $\hB\neq0$.
    Suppose that system \eqref{eq:canonical} is not exactly null controllable.
    Then $0<k\triangleq\dim V_{n-1}<n$ by \autoref{thm:null-controllable}.
    Hence, we can use \autoref{thm:controllability decomposition}.
    Let $P$ be the orthogonal matrix provided by \autoref{thm:controllability decomposition} such that \eqref{eq:decomposition} holds.
    Define $\cL_2:\dbS^{n-k}\to\dbS^{n-k}$ by $\cL_2(H_2)=\hA_{22}H_2\hA_{22}^{\top}+\hC_{22}H_2\hC_{22}^{\top}$ for any $H_2\in\dbS_+^{n-k}$.
    By \autoref{lem:K-R}, there exists $(\lambda,H_2)\in[0,\infty)\times\bar\dbS_+^{n-k}\setminus\{0\}$ such that $\cL_2^*(H_2)=\lambda H_2$.
    Set $H=P\diag(0,H_2)P^{\top}$.
    Then $H\in\bar\dbS_+^n\setminus\{0\}$ and 
    \begin{align*}
        \cL^*(H)&=\hA^{\top}H\hA+\hC^{\top}H\hC\\
        &=PP^{\top}\hA^{\top}P\diag(0,H_2)P^{\top}\hA PP^{\top}+PP^{\top}\hC^{\top}P\diag(0,H_2)P^{\top}\hC PP^{\top}\\
        &=P\diag(0,\hA_{22}^{\top}H_2\hA_{22})P^{\top}+P\diag(0,\hC_{22}^{\top}H_2\hC_{22})P^{\top}\\
        &=P\diag(0,\cL_2^*(H_2))P^{\top}\\
        &=\lambda H.
    \end{align*}
    On the other hand,
    \begin{align*}
        \hB^{\top}HP=\hB^{\top}PP^{\top}HP=(\hB_1^{\top},0)\diag(0,H_2)=0.
    \end{align*}
    Hence, $\hB^{\top}H=0$, contradicting (ii).
    Therefore, system \eqref{eq:canonical} is exactly null controllable.
\end{proof}

\section{\texorpdfstring{$L^2$-stabilizability}{L2-stabilizability}}
\label{sec:stabilizability}

In this section, we investigate a Hautus-type criterion for the $L^2$-stabilizability of system \eqref{eq:canonical}. 
We first establish a spectral characterization for strict inequalities involving positive operators and apply it to the special case $B=0$.
We then show that exact null controllability implies $L^2$-stabilizability and establish a stabilizability decomposition that reduces the general problem to a lower-dimensional subsystem. 
Combining these results yields the desired Hautus-type criterion for system \eqref{eq:canonical}.

We begin with several elementary matrix inequalities involving positive (semi)definite matrices.
\begin{lemma}
\label{lem:matrix}
    \begin{enumerate}[(i)]
        \item Let $P,Q\in\bar\dbS_+^n$.
        Then $\lambda_{\min}(Q)\tr(P)\leq\tr(PQ)\leq\lambda_{\max}(Q)\tr(P)$.
        \item Let $P,Q\in\dbS_+^n$ and $A\in\dbR^{n\times n}$.
        Then $A^{\top}Q^{-1}A<P^{-1}$ if and only if $APA^{\top}<Q$.
        \item Let $P\in\dbS^n$.
        Then $P\geq0$ if and only if $\lan P,H\ran\geq0,\forall H\in\bar\dbS_+^n$.
    \end{enumerate}
\end{lemma}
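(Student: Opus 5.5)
The three parts are independent elementary facts, and I will prove them in order (i), (iii), (ii), because the cleanest route to (ii) is through the Schur complement.

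For (i), I will use the square root of $P$. Since $P\in\bar\dbS_+^n$, we have
\begin{align*}
\tr(PQ)=\tr(P^{1/2}QP^{1/2}).
\end{align*}
The matrix $P^{1/2}QP^{1/2}$ is sandwiched as
\begin{align*}
\lambda_{\min}(Q)P\leq P^{1/2}QP^{1/2}\leq\lambda_{\max}(Q)P,
\end{align*}
because $\lambda_{\min}(Q)I\leq Q\leq\lambda_{\max}(Q)I$ and congruence by $P^{1/2}$ preserves the order. Taking traces, which is monotone on $\dbS^n$, gives the two bounds.

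For (iii), necessity follows from (i): if $P\geq0$ and $H\in\bar\dbS_+^n$, then $\lan P,H\ran=\tr(PH)\geq\lambda_{\min}(H)\tr(P)\geq0$. Alternatively, $\tr(H^{1/2}PH^{1/2})\geq0$ directly. For sufficiency, I test against rank-one matrices: taking $H=vv^{\top}$ gives $\lan P,H\ran=v^{\top}Pv\geq0$ for every $v\in\dbR^n$, which is exactly $P\geq0$.

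For (ii), I will use the block matrix
\begin{align*}
\mathcal M=\begin{pmatrix} P^{-1} & A^{\top}\\ A & Q\end{pmatrix}.
\end{align*}
Since $Q>0$, the Schur complement with respect to the $Q$ block gives $\mathcal M>0$ if and only if $P^{-1}-A^{\top}Q^{-1}A>0$. Since $P^{-1}>0$, the Schur complement with respect to the $P^{-1}$ block gives $\mathcal M>0$ if and only if $Q-APA^{\top}>0$. Both conditions are therefore equivalent to $\mathcal M>0$, and hence to each other.

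The only mildly delicate point is the strict-inequality Schur complement lemma. I will justify it through the congruence
\begin{align*}
\begin{pmatrix} I & -A^{\top}Q^{-1}\\ 0 & I\end{pmatrix}\mathcal M\begin{pmatrix} I & 0\\ -Q^{-1}A & I\end{pmatrix}=\diag(P^{-1}-A^{\top}Q^{-1}A,\ Q),
\end{align*}
together with the analogous congruence that eliminates the off-diagonal blocks using the $P^{-1}$ block, which produces $\diag(P^{-1},\ Q-APA^{\top})$. Since congruence by an invertible matrix preserves positive definiteness, each block diagonal is positive definite exactly when $\mathcal M$ is.
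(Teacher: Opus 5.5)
Your proposal is correct and follows the paper's proof essentially step for step. You use the sandwich $\lambda_{\min}(Q)P\leq P^{1/2}QP^{1/2}\leq\lambda_{\max}(Q)P$ for (i), the Schur complement of the block matrix with diagonal blocks $P^{-1}$ and $Q$ for (ii), and rank-one test matrices $vv^{\top}$ for (iii). The one addition is that you justify the strict Schur-complement equivalence with explicit congruences (which check out), where the paper simply invokes it.
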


\begin{proof}
\begin{enumerate}[(i)]
    \item 
    Note that $\lambda_{\min}(Q)I\leq Q\leq \lambda_{\max}(Q)I$.
    Then $\lambda_{\min}(Q)P
\leq P^{\frac{1}{2}}QP^{\frac{1}{2}}
\leq \lambda_{\max}(Q)P$, which implies that 
$$
\lambda_{\min}(Q)\operatorname{tr}(P)
\leq\tr(P^{\frac{1}{2}}QP^{\frac{1}{2}})= \operatorname{tr}(PQ)
\leq \lambda_{\max}(Q)\operatorname{tr}(P).
$$
\item 
Since $P>0$ and $Q>0$, the Schur complement yields
$$
A^\top Q^{-1}A<P^{-1}
\quad\Longleftrightarrow\quad
\begin{pmatrix}
P^{-1} & A^\top\\
A & Q
\end{pmatrix}>0
\quad\Longleftrightarrow\quad
Q-APA^\top>0.
$$
\item 
Suppose first that $P\geq0$. For every $H\in\bar\dbS_+^n$,
$$
\langle P,H\rangle
=\operatorname{tr}(PH)
=\operatorname{tr}\bigl(P^{1/2}HP^{1/2}\bigr)
\geq0.
$$
Conversely, suppose that$\langle P,H\rangle\geq0,\forall H\in\bar\dbS_+^n$.
For any $v\in\mathbb R^n$, take $H=vv^\top\in\bar\dbS_+^n$. Then
$$
v^\top Pv
=\operatorname{tr}(Pvv^\top)
=\langle P,vv^\top\rangle
\geq0.
$$
Hence $P\geq0$. The proof is done.
\end{enumerate}
\end{proof}

Based on \autoref{lem:matrix}, we next give several characterizations of the existence of a positive definite solution to the following strict operator inequality $\G(P)-P>0$, whose direction is opposite to that of the inequality in \autoref{lem:stable} (iv).

\begin{proposition}
\label{prop:positive-operator}
    Let $\G:\dbS^n\to\dbS^n$ be a positive operator.
    Then the following statements are equivalent.
    \begin{enumerate}[(i)]
        \item There exists $P\in\dbS_+^n$ such that $\G(P)-P>0$.
        \item If $S\in\bar\dbS_+^n$ and $\G^*(S)-S\leq0$, then $S=0$.
        \item If $H\in\bar\dbS_+^n\setminus\{0\}$ and $\G^*(H)=\lambda H$, then $\lambda>1$.
    \end{enumerate}
\end{proposition}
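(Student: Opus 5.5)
We prove the cycle (i) $\implies$ (ii) $\implies$ (iii) $\implies$ (i). Throughout we use the duality $\lan\G(P),S\ran=\lan P,\G^*(S)\ran$. We also use that positivity of $\G$ means $\G(\bar\dbS_+^n)\subset\bar\dbS_+^n$, which implies $\G^*(\bar\dbS_+^n)\subset\bar\dbS_+^n$ by \autoref{lem:matrix}(iii).

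\emph{(i) $\implies$ (ii).} Let $P>0$ satisfy $Q:=\G(P)-P>0$, and let $S\in\bar\dbS_+^n$ satisfy $\G^*(S)-S\leq0$. Then
$$0\leq\lambda_{\min}(Q)\tr(S)\leq\lan Q,S\ran=\lan P,\G^*(S)-S\ran\leq0.$$
The first inequality is \autoref{lem:matrix}(i), and the last is \autoref{lem:matrix}(iii) applied to $S-\G^*(S)\geq0$ against $P$. Since $\lambda_{\min}(Q)>0$, we get $\tr(S)=0$, hence $S=0$.

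\emph{(ii) $\implies$ (iii).} Let $\G^*(H)=\lambda H$ with $0\neq H\geq0$, and suppose $\lambda\leq1$. Then $\G^*(H)-H=(\lambda-1)H\leq0$, so (ii) forces $H=0$, a contradiction. Hence $\lambda>1$.

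\emph{(iii) $\implies$ (i).} This direction is the main obstacle, since it must produce a $P$ from spectral information. First apply \autoref{lem:K-R} to $\G^*$ on the cone $\bar\dbS_+^n$: there is $0\neq H\geq0$ with $\G^*(H)=r(\G^*)H$. By (iii), $r(\G)=r(\G^*)>1$, so $\G$ is at least nontrivial, but this alone does not give (i).

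Suppose (i) fails. Consider the convex cone $\mathcal{C}:=\{\G(P)-P:\ P\in\dbS^n\}$ (a linear subspace), which by assumption contains no element of the open cone $\dbS_+^n$; the intended $P$ must be positive definite, so we work with the set $\{\G(P)-P:P\in\dbS_+^n\}$, a convex cone disjoint from the open convex cone $\dbS_+^n$. By the separating hyperplane theorem there is $0\neq S\in\dbS^n$ such that $\lan\G(P)-P,S\ran\leq0\leq\lan M,S\ran$ for all $P>0$ and $M>0$. By density and \autoref{lem:matrix}(iii), $S\geq0$ and $\lan P,\G^*(S)-S\ran\leq0$ for all $P\geq0$, i.e.\ $\G^*(S)-S\leq0$.

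It then remains to show that the existence of such an $S$ contradicts (iii). Consider the closed convex cone $K_S:=\{T\geq0:\ \G^*(T)\leq T\}$, which is nonzero since it contains $S$. We try to find an eigenvector of $\G^*$ inside it with eigenvalue $\leq1$. The plan is to restrict to the face $F:=\{T\geq0:\ \im T\subset\im S\}$. For $T\in F$ we have $T\leq cS$ for some $c>0$, so $0\leq\G^*(T)\leq c\G^*(S)\leq cS$, and therefore $\G^*(T)\in F$. Identifying $F$ with $\bar\dbS_+^{r}$, where $r=\rank S$, the restriction of $\G^*$ is a positive operator on a solid cone, so \autoref{lem:K-R} yields $0\neq H\in F$ with $\G^*(H)=\mu H$ and $\mu=r(\G^*|_F)$.

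We bound $\mu$ as follows. Here $S$ is interior to $F$ and $\G^*(S)\leq S$. Iterating gives $(\G^*|_F)^k(S)\leq S$, and every $T\in F$ is dominated by a multiple of $S$, so the powers of $\G^*|_F$ are bounded. Hence $\mu\leq1$, contradicting (iii). This proves (i).
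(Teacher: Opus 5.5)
Your proof is correct and follows essentially the same route as the paper: the same trace estimate via \autoref{lem:matrix} for (i)$\Rightarrow$(ii); a separating-hyperplane argument producing $0\neq S\geq0$ with $\G^*(S)\leq S$; and Krein--Rutman applied to $\G^*$ restricted to the face $F=\{T\geq0:\im T\subset\im S\}$, which is exactly the paper's cone $\cC_S$, with $\mu\leq1$ obtained from $0\leq(\G^*)^k(T)\leq cS$ where the paper uses the order-unit norm. The only blemishes are cosmetic: your preliminary use of Krein--Rutman to get $r(\G^*)>1$ is never used, and $\{\G(P)-P:P\in\dbS^n\}$ need not avoid $\dbS_+^n$ when (i) fails, but you switch to $P\in\dbS_+^n$ before separating, so the argument is unaffected.
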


\begin{proof}
    (i) $\implies$ (ii).
 Suppose that $P>0$ and $\G(P)-P>0$.
 Let $S\in\bar\dbS_+^n$ and $\G^*(S)-S\leq0$.
    If $S\neq0$, then, by \autoref{lem:matrix} (i), we have 
    \begin{align*}
        0&<\lambda_{\min}(\G(P)-P)\tr(S)\leq\lan\G(P)-P,S\ran=\lan P,(\G^*-I)(S)\ran\\
        &=\tr(P(\G^*-I)(S))\leq\lambda_{\max}((\G^*-I)(S))\tr(P)\leq0,
    \end{align*}
   which gives a contradiction.
   Hence, $S=0$.

   (ii) $\implies$ (i).
   Argue by contradiction.
   Suppose that (i) fails.
   Define $\cC=\{\G(P)-P:P\in\dbS_+^n\}$.
   Then $\cC\cap\dbS_+^n=\emptyset$.
   Note that $\cC$ is a convex set and $\dbS_+^n$ is an open convex set.
   By Hahn-Banach theorem, there exist a nonzero linear functional $f$ on $\dbS^n$ and $\alpha\in\dbR$ such that 
   \begin{align*}
       f(P_1)\leq\alpha\leq f(P_2),\quad\forall P_1\in\cC,\quad\forall P_2\in\dbS_+^n.
   \end{align*}
   Note that for any $t>0,P_1\in\cC,P_2\in\dbS_+^n$, we have $tP_1\in\cC,tP_2\in\dbS_+^n$.
   Then 
   \begin{align*}
       tf(P_1)\leq\alpha\leq tf(P_2),\quad\forall P_1\in\cC,\quad\forall P_2\in\dbS_+^n,\quad\forall t>0.
   \end{align*}
   Let $t\to0$.
   Then $\alpha=0$.
   Hence, $f((\G-I)(P_1))\leq0$ for any $P_1>0$ and $f(P_2)\geq0$ for any $P_2>0$.
   By the continuity of $f$ and $\G$, we have 
   \begin{align*}
       f(\G(P_1)-P_1)\leq0,\quad\forall P_1\in\bar\dbS_+^n\quad\text{and }\quad f(P_2)\geq0,\quad\forall P_2\in\bar\dbS_+^n.
   \end{align*}
   On the other hand, by Riesz representation theorem, there exists $S\in\dbS^n\setminus\{0\}$ such that $f(X)=\lan S,X\ran$ for any $X\in\dbS^n$.
   Hence, we have
   \begin{align*}
       \begin{cases}
           \lan(\G^*-I)S,P_1\ran=\lan S,(\G-I)P_1\ran=f(\G(P_1)-P_1)\leq0,\quad\forall P_1\in\bar\dbS_+^n,\\
           \lan S,P_2\ran=f(P_2)\geq0,\quad\forall P_2\in\bar\dbS_+^n.
       \end{cases}
   \end{align*}
   By \autoref{lem:matrix} (iii), we have $\G^*(S)-S\leq0$ and $S\geq0$.
   By (ii), $S=0$, which is impossible.

   (ii) $\implies$ (iii).
   Let $H\in\bar\dbS_+^n\setminus\{0\}$ and $\G^*(H)=\lambda H$.
   If $\lambda\leq1$, then $\G^*(H)=\lambda H\leq H$.
   Hence, $H=0$ by (ii), which gives a contradiction.

   (iii) $\implies$ (ii).
   We argue by contradiction.
   Suppose that there exists $S\in\bar\dbS_+^n\setminus\{0\}$ such that $\G^*(S)\leq S$.
   Define 
   \begin{align*}
       \cC_S\triangleq\{X\in\bar\dbS_+^n:\exists c\geq0\text{ such that }X\leq cS\}.
   \end{align*}
   If $X\in\cC_S$, then there exists $c\geq0$ such that $0\leq X\leq cS$.
   Note that $\lan\G^*(P),Q\ran=\lan P,\G(Q)\ran\geq0$ for any $P,Q\in\bar\dbS_+^n$ by \autoref{lem:matrix} (iii).
   Thus, $\G^*$ is positive, which gives
   \begin{align*}
       0\leq\G^*(X)\leq c\G^*(S)\leq cS,
   \end{align*}
  Thus, $\G^*(X)\in\cC_S$ and $\G^*(\cC_S)\subset\cC_S$.
  Let $V_S:=\Span\cC_S=\cC_S-\cC_S$. Then $V_S$ is $\G^*$-invariant.
   Now we equip $V_S$ with the order-unit norm (see \citealp{paulsen2009vector}) induced by $S$:
   \begin{align*}
       \|X\|_S\triangleq\inf\{c\geq0:-cS\leq X\leq cS\},\quad\forall X\in V_S.
   \end{align*}
   Let $\bar\G^*$ be the restriction of $\G^*$ to $V_S$.
   If $-cS\leq X\leq cS$, then $-c\bar\G^*(S)\leq\bar\G^*(X)\leq c\G^*(S)$.
   By $0\leq\bar\G^*(S)\leq S$, we have $-cS\leq\bar\G^*(X)\leq cS$.
   Hence, $\|\bar\G^*(X)\|_S\leq\|X\|_S$ for any $X\in V_S$.
   Then $r(\bar\G^*)\leq1$.
   On the other hand, since $\cC_S$ is a closed, pointed, solid, and convex cone in $V_S$ and $\bar\G^*(\cC_S)\subset\cC_S$, there exists $0\neq H\in\cC_S\subset\bar\dbS_+^n$ such that $\G^*(H)=r(\bar\G^*)H$ by \autoref{lem:K-R}. 
   This contradicts (iii).
   The proof is done.
\end{proof}

We next characterize $L^2$-stabilizability of system \eqref{eq:canonical} in the special case $B=0$, which serves as a basis for the analysis of the general case.

\begin{proposition}
\label{prop:stabilizable-B=0}
    Consider the system 
    \begin{align}
        \label{eq:system-B=0}
            \begin{cases}
                x_{k+1}=Ax_k+Cz_k+w_kz_k,\\
                x_0=y\in\dbR^n.
            \end{cases}
        \end{align}
        Then the following results hold.
    \begin{enumerate}[(i)]
        \item System \eqref{eq:system-B=0} is $L^2$-stabilizable if and only if there exists $P\in\dbS_+^n$ such that 
        \begin{align}
        \label{eq:stable-B=0}
            P+CPC^{\top}-APA^{\top}>0.
        \end{align}
        \item 
        Suppose that \autoref{ass:Ainvertible} holds.
        Then system \eqref{eq:system-B=0} is $L^2$-stabilizable if and only if, for any $H\in\bar\dbS_+^n\setminus\{0\}$ satisfying $\cL^*(H)=\lambda H$, one has $\lambda>1$.
    \end{enumerate}
\end{proposition}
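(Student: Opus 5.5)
We will obtain (i) from \autoref{lem:stabilizable} and \autoref{lem:stable}, and then get (ii) from (i) by a congruence that turns \eqref{eq:stable-B=0} into a strict inequality for $\cL$, to which \autoref{prop:positive-operator} applies.

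\emph{Part (i): reduction to a feedback problem.} System \eqref{eq:system-B=0} is system \eqref{eq:system} with the control $z$ and the coefficients $(A,B,C,D)$ replaced by $(A,C,0,I)$. By \autoref{lem:stabilizable} and \autoref{def:stabilizable}, it is $L^2$-stabilizable iff there is $\Theta\in\dbR^{n\times n}$ such that
$$x_{k+1}=(A+C\Theta)x_k+\Theta w_kx_k$$
is $L^2$-stable. By \autoref{lem:stable} (iv), this holds iff there exist $\Theta$ and $Q\in\dbS_+^n$ with
$$Q-(A+C\Theta)^{\top}Q(A+C\Theta)-\Theta^{\top}Q\Theta>0.$$

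\emph{Part (i): eliminating $\Theta$.} For fixed $Q>0$, the map $\Theta\mapsto(A+C\Theta)^{\top}Q(A+C\Theta)+\Theta^{\top}Q\Theta$ is a matrix quadratic with leading coefficient $Q+C^{\top}QC>0$. Completing the square, it attains its Loewner minimum at
$$\Theta^*=-(Q+C^{\top}QC)^{-1}C^{\top}QA,$$
and the minimum value is $A^{\top}\big[Q-QC(Q+C^{\top}QC)^{-1}C^{\top}Q\big]A$. Hence the existence of some suitable $\Theta$ is equivalent to this inequality holding at $\Theta^*$. The Woodbury identity gives
$$Q-QC(Q+C^{\top}QC)^{-1}C^{\top}Q=(Q^{-1}+CQ^{-1}C^{\top})^{-1},$$
so the condition becomes
$$A^{\top}(Q^{-1}+CQ^{-1}C^{\top})^{-1}A<Q.$$
We apply \autoref{lem:matrix} (ii) with the lemma's $Q$ taken as $Q^{-1}+CQ^{-1}C^{\top}$ and its $P$ taken as $Q^{-1}$. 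This turns the condition into $AQ^{-1}A^{\top}<Q^{-1}+CQ^{-1}C^{\top}$. Setting $P=Q^{-1}\in\dbS_+^n$ gives exactly \eqref{eq:stable-B=0}. Every step is reversible, so sufficiency follows by taking $Q=P^{-1}$ and $\Theta=\Theta^*$.

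\emph{Part (ii).} Under \autoref{ass:Ainvertible}, we multiply \eqref{eq:stable-B=0} on the left by $\hA=A^{-1}$ and on the right by $\hA^{\top}$. This is a congruence, so it preserves strict positivity. Since $\hA C=-\hC$, the inequality becomes
$$\hA P\hA^{\top}+\hC P\hC^{\top}-P>0,\qquad\text{i.e.}\qquad \cL(P)-P>0.$$
Thus (i) shows that stabilizability is equivalent to statement (i) of \autoref{prop:positive-operator} with $\G=\cL$, which is a positive operator. The equivalence (i)$\iff$(iii) of that proposition then gives the eigenvalue criterion for $\cL^*$ stated in (ii).

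The main obstacle is the elimination of $\Theta$ in part (i). One must check carefully that the completed-square minimum is a true Loewner minimum, not merely a stationary point, which follows from $Q+C^{\top}QC>0$. One must also verify the Woodbury step, which is what converts the primal Lyapunov form into the dual form $P+CPC^{\top}-APA^{\top}$.
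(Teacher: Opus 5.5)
Your proposal is correct and follows essentially the same route as the paper. You reduce to the closed-loop Lyapunov inequality via \autoref{lem:stable}~(iv), complete the square in $\Theta$ with minimizer $\Theta^*=-(Q+C^{\top}QC)^{-1}C^{\top}QA$, and use the Woodbury identity with $P=Q^{-1}$ together with \autoref{lem:matrix}~(ii) to reach \eqref{eq:stable-B=0}; for (ii) you pass to $\cL(P)-P>0$ by congruence with $\hA$ (using $\hA C=-\hC$) and invoke \autoref{prop:positive-operator}, which is exactly what the paper compresses into ``by (i) and \eqref{eq:hA-A}''.
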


\begin{proof}
     (i).
      By the definition of $L^2$-stabilizability and \autoref{lem:stable}, system \eqref{eq:system-B=0} is $L^2$-stabilizable if and only if there exist $Q\in\dbS_+^n$ and $\Theta\in\dbR^{n\times n}$ such that 
     \begin{align}
     \label{eq:stabilizable-B=0}
         (A+C\Theta)^{\top}Q(A+C\Theta)+\Theta^{\top}Q\Theta-Q<0.
     \end{align}
     
     Suppose that \eqref{eq:stabilizable-B=0} holds for some $(Q,\Theta)\in\dbS_+^n\times\dbR^{n\times n}$.
    Let $\cR:=Q+C^{\top}QC$ and $\cN:=C^{\top}QA$.
    Then, by completing the square in $\Theta$, we have
    \begin{align*}
        &A^{\top}(Q-QC\cR^{-1}C^{\top}Q)A-Q\\
        \leq&(\Theta+\cR^{-1}\cN)^{\top}\cR(\Theta+\cR^{-1}\cN)-\cN^{\top}\cR^{-1}\cN+A^{\top}QA-Q\\
        =&(A+C\Theta)^{\top}Q(A+C\Theta)+\Theta^{\top}Q\Theta-Q<0
    \end{align*}
    Let $P\triangleq Q^{-1}$.
    Then $Q-QC\cR^{-1}C^{\top}Q=(P+CPC^{\top})^{-1}$.
    Hence, we have
    \begin{align*}
        A^{\top}(P+CPC^{\top})^{-1}A-P^{-1}=A^{\top}(Q-QC\cR^{-1}C^{\top}Q)A-Q<0,
    \end{align*}
   and then \eqref{eq:stable-B=0} holds by \autoref{lem:matrix} (ii).

   Conversely, suppose there exists $P\in\dbS_+^n$ such that \eqref{eq:stable-B=0} holds.
   Define $Q\triangleq P^{-1}\in\dbS_+^n$ and $\cR=Q+C^{\top}QC$.
    Then $A^{\top}(Q-QC\cR^{-1}C^{\top}Q)A-Q<0$ by the above discussion.
    Let $\Theta=-(Q+C^{\top}QC)^{-1}C^{\top}QA$.
    Then 
    \begin{align*}
        (A+C\Theta)^{\top}Q(A+C\Theta)+\Theta^{\top}Q\Theta-Q=A^{\top}(Q-QC\cR^{-1}C^{\top}Q)A-Q<0.
    \end{align*}
    It follows that \eqref{eq:stabilizable-B=0} holds and then system \eqref{eq:system-B=0} is $L^2$-stabilizable.

    (ii).
    By (i) and \eqref{eq:hA-A}, $L^2$-stabilizability of system \eqref{eq:system-B=0} is equivalent to $\cL(P)>P$ for some $P\in\dbS_+^n$.
    Then (ii) follows by \autoref{prop:positive-operator}.
    The proof is done.
\end{proof}

Before treating the general case, we show that exact null controllability implies $L^2$-stabilizability.

\begin{lemma}
\label{lem:controllable-stabilizable}
    If system \eqref{eq:canonical} is exactly null controllable, then system \eqref{eq:canonical} is $L^2$-stabilizable.
\end{lemma}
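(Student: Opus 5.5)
Since there is no prescribed horizon, we first fix a common one: by \autoref{prop:null-controllable} and \autoref{thm:null-controllable}, exact null controllability gives an $N\in\dbN$ (e.g.\ $N=n-1$) such that system \eqref{eq:canonical} is exactly null controllable on $[0,N]$. We then use the open-loop characterization \autoref{lem:stabilizable} (ii), applied to \eqref{eq:canonical} written in the form \eqref{eq:system}. The control there is the pair $(u,z)$, with $B$ replaced by $(B,C)$ and $D$ by $(0,I)$, and the state coefficient in the noise term is $0$.

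Given $y\in\dbR^n$, take $(u,z)\in l_{\dbF}^2(0,N;\dbR^m)\times l_{\dbF}^2(0,N;\dbR^n)$ steering $y$ to $x_{N+1}=0$ a.s. Extend them by $u_k=0$, $z_k=0$ for $k\ge N+1$. Since $x_{N+1}=0$ and the dynamics are linear with zero input afterwards, $x_k=Ax_{k-1}=0$ for all $k\geq N+1$. The extended processes are predictable, and the sums $\sum_k\dbE(|u_k|^2+|z_k|^2)$ and $\sum_k\dbE|x_k|^2$ are finite sums of finite terms. Square-integrability of $x_0,\dots,x_N$ follows from the recursion, since each $x_{k+1}$ is a linear combination of $x_k,u_k,z_k,w_kz_k$. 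Here we need $\dbE|w_kz_k|^2=\dbE|z_k|^2<\infty$, which holds by independence of $w_k$ from $\cF_{k-1}$ together with $\dbE w_k^2=1$. Hence $(u,z)\in l_{\dbF}^2(0,\infty)$ and $x\in l_{\dbF}^2(0,\infty;\dbR^n)$, so \autoref{lem:stabilizable} (ii) holds and therefore (i) holds. That is, system \eqref{eq:canonical} is $L^2$-stabilizable, with a feedback stabilizer given via the Riccati solution \eqref{eq:stabilizer}.

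The only delicate point is the identification of \eqref{eq:canonical} with the general form \eqref{eq:system}, so that \autoref{lem:stabilizable} applies. Write $\tilde u=(u^\top,z^\top)^\top$, $\tilde B=(B,C)$, $\tilde C=0$, $\tilde D=(0,I)$. In these terms \eqref{eq:canonical} becomes $x_{k+1}=Ax_k+\tilde B\tilde u_k+w_k(\tilde Cx_k+\tilde D\tilde u_k)$, and the proof of \autoref{lem:stabilizable} does not use any structure beyond this form. We also check that $L^2$-stabilizability of the backward-structured system is understood in this sense, i.e.\ with $z$ treated as part of the control. This is consistent with how \autoref{prop:stabilizable-B=0} treats $z$ through $\Theta$. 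With this identification, the open-loop construction above is essentially the entire argument.
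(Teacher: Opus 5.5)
Your proposal is correct and matches the paper's proof: fix a common horizon $N$ via \autoref{thm:null-controllable}, extend the steering pair $(u,z)$ by zero after $N$ so that the state vanishes from $N+1$ onward, and invoke the open-loop characterization (ii)$\Rightarrow$(i) of \autoref{lem:stabilizable}. Your explicit identification of \eqref{eq:canonical} with the form \eqref{eq:system}, using the control $(u,z)$, $\tilde B=(B,C)$, $\tilde C=0$, $\tilde D=(0,I)$, and your square-integrability check spell out steps the paper leaves implicit.
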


\begin{proof}
    By \autoref{thm:null-controllable}, there exists $N\in\dbN$ such that system \eqref{eq:canonical} is exactly null controllable on $[0,N]$.
    For any $y\in\dbR^n$, there exists $(u,z)\in l_{\dbF}^2(0,N;\dbR^m)\times l_{\dbF}^2(0,N;\dbR^n)$ such that $x_{N+1}=0$.
    Define $(\bar u,\bar z)\in l_{\dbF}^2(0,\infty;\dbR^m)\times l_{\dbF}^2(0,\infty;\dbR^n)$ and $\bar x\in l_{\dbF}^2(0,\infty;\dbR^n)$ by
    \begin{align*}
        \bar u_k=
        \begin{cases}
            u_k,\quad k\in[0,N],\\
            0,\quad k\geq N+1,
        \end{cases}\quad
        \bar z_k=
        \begin{cases}
            z_k,\quad k\in[0,N],\\
            0,\quad k\geq N+1,
        \end{cases}\quad
        \bar x_k=
        \begin{cases}
            x_k,\quad k\in[0,N+1],\\
            0,\quad k>N+1.
        \end{cases}
    \end{align*}
    Then $(\bar x,\bar u,\bar z)$ satisfies \eqref{eq:canonical} for any $k\in\dbN$.
    By \autoref{lem:stabilizable}, system \eqref{eq:canonical} is $L^2$-stabilizable.
    The proof is done.
\end{proof}

Combining \autoref{lem:controllable-stabilizable} with the controllability decomposition in \autoref{thm:controllability decomposition}, we obtain the following reduction of $L^2$-stabilizability of system \eqref{eq:canonical}.

\begin{theorem}[Stabilizability decomposition]
\label{thm:stabilizable-decomposition}
    Let \autoref{ass:Ainvertible} hold.
    Suppose that $0<k\triangleq\dim V_{n-1}<n$, where $V_{n-1}$ is defined by \eqref{eq:V}.
    Let $P\in\dbR^{n\times n}$ be the orthogonal matrix satisfying \eqref{eq:decomposition} and \eqref{eq:decompostion-original-matrix}.
    The system \eqref{eq:canonical} is $L^2$-stabilizable if and only if the subsystem 
    \begin{align}
    \label{eq:subsystem-2}
        \begin{cases}
            y_{2,k+1}=A_{22}y_{2,k}+C_{22}v_{2,k}+w_kv_{2,k},\\ y_{2,0}\in\dbR^{n-k},
        \end{cases}
    \end{align}
    is $L^2$-stabilizable, where $A_{22},C_{22}$ are defined by \eqref{eq:A11-hA11}.
\end{theorem}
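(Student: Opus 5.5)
After the orthogonal change of variables $x=P(y_1^{\top},y_2^{\top})^{\top}$, $z=P(v_1^{\top},v_2^{\top})^{\top}$, system \eqref{eq:canonical} becomes, by \eqref{eq:decompostion-original-matrix}, the block-triangular system
\begin{align*}
y_{1,k+1}&=A_{11}y_{1,k}+A_{12}y_{2,k}+B_1u_k+C_{11}v_{1,k}+C_{12}v_{2,k}+w_kv_{1,k},\\
y_{2,k+1}&=A_{22}y_{2,k}+C_{22}v_{2,k}+w_kv_{2,k}.
\end{align*}
Because $P$ is orthogonal, square-summability of $(x,z)$ is equivalent to that of $(y_1,y_2,v_1,v_2)$. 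I will use the open-loop characterization of $L^2$-stabilizability from \autoref{lem:stabilizable}(ii): for every initial state there is an admissible input with square-summable state. In the backward-structured setting the input is the pair $(u,z)$, that is, $D=(0,I)$ in the form \eqref{eq:system}.

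\textbf{Necessity.} Fix $y_{2,0}$ and take $y_{1,0}=0$. An admissible pair $(u,z)$ with $x\in l_{\dbF}^2$ gives, after the transformation, a square-summable $v_2$. Note that $v_2$ is square-summable because $z$ is part of the admissible input in $l_{\dbF}^2$, not merely the state. The second block equation is exactly \eqref{eq:subsystem-2} driven by $v_2$, and $y_2$ is square-summable. Hence \eqref{eq:subsystem-2} satisfies the open-loop condition, and by \autoref{lem:stabilizable} it is $L^2$-stabilizable.

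\textbf{Sufficiency.} Let $\Theta_2$ be a stabilizer of \eqref{eq:subsystem-2}, so that $v_{2,k}=\Theta_2y_{2,k}$ makes $y_2$ square-summable, and hence $v_2$ as well. By \autoref{thm:controllability decomposition}(iii), subsystem \eqref{eq:subsystem-1} is exactly null controllable, so by \autoref{lem:controllable-stabilizable} it is $L^2$-stabilizable. By \autoref{lem:stabilizable} there is then a feedback pair $(\Theta_u,\Theta_v)$ such that the closed loop
\[
y_{1,k+1}=(A_{11}+B_1\Theta_u+C_{11}\Theta_v)y_{1,k}+w_k\Theta_vy_{1,k}
\]
is $L^2$-stable, hence exponentially stable by \autoref{lem:stable}. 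I then use the combined feedback $u=\Theta_uy_1$, $v_1=\Theta_vy_1$, $v_2=\Theta_2y_2$. The first block becomes the stable closed loop driven by the adapted forcing $f_k=A_{12}y_{2,k}+C_{12}\Theta_2y_{2,k}$, which is square-summable.

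\textbf{Main obstacle.} The remaining task is to show that an $L^2$-stable stochastic system driven by square-summable adapted forcing has a square-summable state. I would handle this with the Lyapunov matrix $Q>0$ from \autoref{lem:stable}(iv), which gives $\cT(Q)-Q\le -\varepsilon I$ for the closed-loop operator $\cT$. Expanding $\dbE\,y_{1,k+1}^{\top}Qy_{1,k+1}$, the cross terms involving $w_k$ vanish by independence, since $f_k$ is $\cF_{k-1}$-measurable. Young's inequality then yields
\[
\dbE\,y_{1,k+1}^{\top}Qy_{1,k+1}\le \dbE\,y_{1,k}^{\top}Qy_{1,k}-\tfrac{\varepsilon}{2}\dbE|y_{1,k}|^2+c\,\dbE|f_k|^2 .
\]
Summing over $k$ gives $y_1\in l_{\dbF}^2$. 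It follows that $x$, $u$ and $z$ are all square-summable, and \autoref{lem:stabilizable} completes the proof. One more point to check is the measurability of the feedback: $y_{1,k}$ and $y_{2,k}$ are $\cF_{k-1}$-measurable, so the feedback processes $u$ and $v$ are predictable.
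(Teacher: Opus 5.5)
Your proposal is correct. The necessity part is the same open-loop argument that the paper compresses into the word ``obviously''. You take $y_{1,0}=0$ and apply \autoref{lem:stabilizable}(ii) with input $(u,z)$. Then $v_2=(0,I)P^{\top}z$ is square-summable and drives the decoupled $y_2$-equation on its own, so \eqref{eq:subsystem-2} is $L^2$-stabilizable, first in the open-loop and then in the closed-loop sense.

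For sufficiency you use exactly the paper's feedback ($u=F_1y_1$, $v_1=K_1y_1$, $v_2=K_2y_2$). What differs is how you prove that the block-triangular closed loop is stable.

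The paper builds one Lyapunov certificate $P_\epsilon=\diag(\epsilon P_{11},P_{22})$ for the whole closed loop. It then checks $P_\epsilon-\tA^{\top}P_\epsilon\tA-\tC^{\top}P_\epsilon\tC>0$ for small $\epsilon$ via a Schur complement.

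You argue in cascade form instead. First, $y_2\in l_{\dbF}^2$ follows from stability of the lower block. Next, you use the Lyapunov matrix of the upper block alone. The $w_k$ cross terms vanish because $y_{1,k}$ and $f_k$ are $\cF_{k-1}$-measurable, and Young's inequality then gives the input-to-state bound $\tfrac{\varepsilon}{2}\sum_k\dbE|y_{1,k}|^2\le y_{1,0}^{\top}Qy_{1,0}+c\sum_k\dbE|f_k|^2$.

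Your route avoids the $\epsilon$-scaling and the Schur-complement bookkeeping. It also makes clear that only the triangular (cascade) structure and the square-summability of the forcing matter.

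The paper's route stays purely algebraic and yields an explicit quadratic Lyapunov function for the full closed loop. That is convenient if one wants to plug it directly into \autoref{lem:stable}(iv).

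Your combined feedback is a constant gain, so your estimate already shows that this gain, transported back by $P$, is a stabilizer in the sense of \autoref{def:stabilizable}. The final appeal to \autoref{lem:stabilizable} is therefore optional.
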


\begin{proof}
    Let 
    \begin{align*}
        y_k=
        \begin{pmatrix}
            y_{1,k}\\y_{2,k}
        \end{pmatrix}
        \triangleq P^{\top}x_k,\quad
        v_k=
        \begin{pmatrix}
            v_{1,k}\\v_{2,k}
        \end{pmatrix}
        \triangleq P^{\top}z_k,\quad k\in\dbN,
    \end{align*}
    where $y_{1,k},v_{1,k}\in\dbR^k$ and $y_{2,k},v_{2,k}\in\dbR^{n-k}$.
    Then, by \eqref{eq:decompostion-original-matrix} and \eqref{eq:canonical}, we have
    \begin{align}
    \label{eq:state-decomposition}
        \begin{cases}
            y_{1,k+1}=A_{11}y_{1,k}+A_{12}y_{2,k}+B_1u_k+C_{11}v_{1,k}+C_{12}v_{2,k}+w_kv_{1,k},\\
            y_{2,k+1}=A_{22}y_{2,k}+C_{22}v_{2,k}+w_kv_{2,k}.
        \end{cases}
    \end{align}
    Hence, $\left(\begin{matrix}
        F\\K
    \end{matrix}\right)$ is a stabilizer of system \eqref{eq:canonical} if and only if $\left(\begin{matrix}
        FP\\P^{\top}KP
    \end{matrix}\right)$ is a stabilizer of system \eqref{eq:state-decomposition}, which implies that $L^2$-stabilizability of system \eqref{eq:canonical} is equivalent to $L^2$-stabilizability of system \eqref{eq:state-decomposition}.

    Suppose that system \eqref{eq:state-decomposition} is $L^2$-stabilizable.
    Then its subsystem \eqref{eq:subsystem-2} is obviously $L^2$-stabilizable.
    Conversely, suppose that system \eqref{eq:subsystem-2} is $L^2$-stabilizable.
    Then there exist $K_2\in\dbR^{(n-k)\times(n-k)}$ and $P_{22}\in\dbS_+^{n-k}$ such that 
    \begin{align*}
        \Delta_{22}\triangleq P_{22}-M_{22}^{\top}P_{22}M_{22}-K_2^{\top}P_{22}K_2>0,
    \end{align*}
    where $M_{22}=A_{22}+C_{22}K_2$.
    By \autoref{thm:controllability decomposition} and \autoref{lem:controllable-stabilizable}, system \eqref{eq:subsystem-1} is $L^2$-stabilizable.
    Thus, there exist $F_1\in\dbR^{m\times k}$, $K_1\in\dbR^{k\times k}$ and $P_{11}\in\dbS_+^k$ such that 
    \begin{align*}
        \Delta_{11}\triangleq P_{11}-M_{11}^{\top}P_{11}M_{11}-K_1^{\top}P_{11}K_1>0,
    \end{align*}
    where $M_{11}=A_{11}+B_1F_1+C_{11}K_1$.
    Plugging $u=F_1y_1$, $v_1=K_1y_1$ and $v_2=K_2y_2$ into system \eqref{eq:state-decomposition}, we have 
    \begin{align*}
        y_{k+1}=\tA y_k+w_k\tC y_k,
    \end{align*}
    where 
    \begin{align*}
        \tA=
        \begin{pmatrix}
            M_{11}&M_{12}\\
            0&M_{22}
        \end{pmatrix},\quad
        \tC=
        \begin{pmatrix}
            K_1&0\\
            0&K_2
        \end{pmatrix},\quad M_{12}=A_{12}+C_{12}K_2.
    \end{align*}
    Let $P_{\epsilon}=\diag(\epsilon P_{11},P_{22})$, where $\epsilon>0$.
    Then $P_{\epsilon}\in\dbS_+^n$ for any $\epsilon>0$.
    Note that 
    \begin{align*}
        \Delta_{\epsilon}\triangleq P_{\epsilon}-\tA^{\top}P_{\epsilon}\tA-\tC^{\top}P_{\epsilon}\tC=
        \begin{pmatrix}
            \epsilon\Delta_{11}&-\epsilon M_{11}^{\top}P_{11}M_{12}\\-
            \epsilon M_{12}^{\top}P_{11}M_{11}&\Delta_{22}-\epsilon M_{12}^{\top}P_{11}M_{12}
        \end{pmatrix}.
    \end{align*}
    In the above block matrix, the Schur complement of $\epsilon\Delta_{11}$ is
    \begin{align*}
        \Delta_{22}-\epsilon M_{12}^{\top}P_{11}M_{12}-\epsilon M_{12}^{\top}P_{11}M_{11}\Delta_{11}^{-1}M_{11}^{\top}P_{11}M_{12}.
    \end{align*}
    Since $\Delta_{22}>0$, the above Schur complement is positive definite for sufficiently small $\epsilon>0$.
    Hence, $\Delta_{\epsilon}>0$.
    Therefore, system \eqref{eq:state-decomposition} is $L^2$-stabilizable.
    The proof is done.
\end{proof}

Combining the stabilizability decomposition in \autoref{thm:stabilizable-decomposition} with the spectral characterization in \autoref{prop:stabilizable-B=0}, we obtain the following Hautus-type criterion for system \eqref{eq:canonical}.

\begin{theorem}
\label{thm:stabilizable-hautus}
    Under \autoref{ass:Ainvertible}, the following statements are equivalent.
    \begin{enumerate}[(i)]
        \item System \eqref{eq:canonical} is $L^2$-stabilizable.
        \item For any eigenpair $(\lambda,H)\in[0,1]\times\bar\dbS_+^n\setminus\{0\}$ of $\cL^*$, we have $\hB^{\top}H\neq0$.
    \end{enumerate}
\end{theorem}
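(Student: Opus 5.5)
The plan is to split into three cases according to $k\triangleq\dim V_{n-1}$: $k=n$, $k=0$, and $0<k<n$. In the last case we combine \autoref{thm:stabilizable-decomposition} with \autoref{prop:stabilizable-B=0} (ii), applied to the subsystem \eqref{eq:subsystem-2}, and then transfer eigenmatrices between $\cL^*$ and the reduced operator, as in the proof of \autoref{thm:controllable-hautus}.

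\emph{Case $k=n$.} By \autoref{thm:null-controllable} the system is exactly null controllable, so (i) holds by \autoref{lem:controllable-stabilizable}. Condition (ii) also holds, since \autoref{thm:controllable-hautus} gives $\hB^{\top}H\neq0$ for every nonnegative eigenpair, not only those with $\lambda\in[0,1]$. Both sides are true, so they are equivalent.

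\emph{Case $k=0$.} Here $\hB=0$, hence $B=0$, and the system reduces to \eqref{eq:system-B=0}. By \autoref{prop:stabilizable-B=0} (ii), (i) is equivalent to $\lambda>1$ for every eigenpair $(\lambda,H)$ of $\cL^*$ with $H\in\bar\dbS_+^n\setminus\{0\}$. Since $\hB^{\top}H=0$ always holds, (ii) says exactly that no such eigenpair has $\lambda\in[0,1]$. Eigenvalues with positive semidefinite eigenmatrices are automatically nonnegative, so the two conditions coincide.

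\emph{Case $0<k<n$.} By \autoref{thm:stabilizable-decomposition}, (i) is equivalent to $L^2$-stabilizability of \eqref{eq:subsystem-2}. Note that $A_{22}=\hA_{22}^{-1}$ is invertible and $C_{22}=-A_{22}\hC_{22}$, so the operator ``$\cL$'' attached to \eqref{eq:subsystem-2} via \eqref{eq:hA-A} is exactly $\cL_2(H_2)=\hA_{22}H_2\hA_{22}^{\top}+\hC_{22}H_2\hC_{22}^{\top}$. By \autoref{prop:stabilizable-B=0} (ii), (i) is therefore equivalent to:
\[
\text{every eigenpair } (\lambda,H_2) \text{ of } \cL_2^* \text{ with } H_2\in\bar\dbS_+^{n-k}\setminus\{0\} \text{ has } \lambda>1. \tag{$\ast$}
\]
It remains to show that (ii) is equivalent to $(\ast)$.

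For (ii)$\Rightarrow(\ast)$: take an eigenpair $(\lambda,H_2)$ of $\cL_2^*$ with $\lambda\le1$ and set $H=P\diag(0,H_2)P^{\top}$. The computation in the proof of \autoref{thm:controllable-hautus} gives $\cL^*(H)=\lambda H$ and $\hB^{\top}H=0$, which contradicts (ii).

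For $(\ast)\Rightarrow$(ii), the converse direction, which I expect to be the main obstacle: take $H\in\bar\dbS_+^n\setminus\{0\}$ with $\cL^*(H)=\lambda H$, $\lambda\in[0,1]$, and $\hB^{\top}H=0$. Write $\tilde H=P^{\top}HP=\bp H_{11}&H_{12}\\ H_{12}^{\top}&H_{22}\ep$. The steps are as follows.
\begin{enumerate}[(a)]
\item Show $H_{11}=0$. Pair the eigen-equation with the Gramian: by \autoref{lem:GN},
\[
\lan G_{n-1},H\ran=\sum_{j=0}^{n-1}\lambda^j\lan\hB\hB^{\top},H\ran=0.
\]
After conjugating by $P$, the Gramian is $G_{n-1}=P\diag(G^{(1)},0)P^{\top}$, where $G^{(1)}=S_{n-1}S_{n-1}^{\top}>0$ by \autoref{thm:controllability decomposition} (ii). Hence $\tr(G^{(1)}H_{11})=0$, so $H_{11}=0$.
\item Positive semidefiniteness of $\tilde H$ then forces $H_{12}=0$, so $\tilde H=\diag(0,H_{22})$ with $H_{22}\neq0$.
\item Using the block upper-triangular forms in \eqref{eq:decomposition}, the $(2,2)$ block of $P^{\top}\cL^*(H)P$ is $\cL_2^*(H_{22})$. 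Hence $\cL_2^*(H_{22})=\lambda H_{22}$ with $\lambda\le1$, which contradicts $(\ast)$.
\end{enumerate}
The delicate points are the identification in step (a) of the Gramian with its restriction to $V_{n-1}$, which follows from $\im G_{n-1}=V_{n-1}$ via \autoref{lem:G-R} and \autoref{lem:V-R}, and the block computation in step (c).
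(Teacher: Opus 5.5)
Your proposal is correct and follows the paper's proof essentially step for step. It uses the same case split on $\dim V_{n-1}$, the same reduction via \autoref{thm:stabilizable-decomposition} and \autoref{prop:stabilizable-B=0} to the subsystem \eqref{eq:subsystem-2}, and the same lifting $H=P\diag(0,H_2)P^{\top}$. The only variation is in proving $H_{11}=0$: the paper reads off the $(1,1)$ block of the eigen-equation and applies \autoref{thm:controllable-hautus} to the exactly null controllable subsystem \eqref{eq:subsystem-1}, whereas you inline that theorem's Gramian argument on the full system via $P^{\top}G_{n-1}P=\diag(S_{n-1}S_{n-1}^{\top},0)$ with $S_{n-1}S_{n-1}^{\top}>0$, which is equally valid.
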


\begin{proof}
    (i) $\implies$ (ii).
    Let $V_{n-1}$ be defined by \eqref{eq:V}, and set $k\triangleq\dim V_{n-1}$.
    If $k=n$, then system \eqref{eq:canonical} is exactly null controllable, and (ii) follows from \autoref{lem:controllable-stabilizable} and \autoref{thm:controllable-hautus}.
    If $k=0$, then $B=0$, and (ii) follows from \autoref{prop:stabilizable-B=0}.
    Now assume $0<k<n$, and let $P$ be the orthogonal matrix in \autoref{thm:controllability decomposition} satisfying \eqref{eq:decomposition}.
    Let $(\lambda,H)\in[0,1]\times\bar\dbS_+^n\setminus\{0\}$ satisfy $\cL^*(H)=\lambda H$.
    Write $\bar H:=P^{\top}HP$ in block form
    \begin{align*}
        \bar H=
        \begin{pmatrix}
            \bar H_{11}&\bar H_{12}\\
            \bar H_{12}^{\top}&\bar H_{22}
        \end{pmatrix},\quad
        \bar H_{11}\in\bar\dbS_+^k,\quad\bar H_{22}\in\bar\dbS_+^{n-k},\quad\bar H_{12}\in\dbR^{k\times(n-k)}.
    \end{align*}
    We now show that $\hB^{\top}H\neq0$ by contradiction.
    Suppose that $\hB^{\top}H=0$.
    Then 
    \begin{align*}
        0=\hB^{\top}HP=\hB^{\top}PP^{\top}HP=(\hB_1^{\top},0)\bar H=(\hB_1^{\top}\bar H_{11},\hB_1^{\top}\bar H_{12}).
    \end{align*}
    Hence, $\hB_1^{\top}\bar H_{11}=0$.
    By $\cL^*(H)=\lambda H$, we have $\lambda P^{\top}HP=P^{\top}\cL^*(H)P$, which implies that
    \begin{align*}
        \lambda
        \begin{pmatrix}
            \bar H_{11}&\bar H_{12}\\
            \bar H_{12}^{\top}&\bar H_{22}
        \end{pmatrix}=P^{\top}\hA^{\top}PP^{\top}HPP^{\top}\hA P+P^{\top}\hC^{\top}PP^{\top}HPP^{\top}\hC P=
        \begin{pmatrix}
            \Lambda_{11}&\Lambda_{12}\\
            \Lambda_{12}^{\top}&\Lambda_{22}
        \end{pmatrix},
    \end{align*}
    where 
    \begin{align*}
        \begin{cases}
            \Lambda_{11}=\hA_{11}^{\top}\bar H_{11}\hA_{11}+\hC_{11}^{\top}\bar H_{11}\hC_{11},\\
            \Lambda_{12}=\hA_{11}^{\top}(\bar H_{11}\hA_{12}+\bar H_{12}\hA_{22})+\hC_{11}^{\top}(\bar H_{11}\hC_{12}+\bar H_{12}\hC_{22}),\\
            \Lambda_{22}=\hA_{12}^{\top}(\bar H_{11}\hA_{12}+\bar H_{12}\hA_{22})+\hA_{22}^{\top}(\bar H_{12}^{\top}\hA_{12}+\bar H_{22}\hA_{22})\\
            \qquad\quad+\hC_{12}^{\top}(\bar H_{11}\hC_{12}+\bar H_{12}\hC_{22})+\hC_{22}^{\top}(\bar H_{12}^{\top}\hC_{12}+\bar H_{22}\hC_{22}).
        \end{cases}
    \end{align*}
    Then, we have 
    \begin{align*}
        \hA_{11}^{\top}\bar H_{11}\hA_{11}+\hC_{11}^{\top}\bar H_{11}\hC_{11}=\lambda\bar H_{11},\quad\hB_1^{\top}\bar H_{11}=0.
    \end{align*}
    By \autoref{thm:controllability decomposition}, system \eqref{eq:subsystem-1} is exactly null controllable.
    By \autoref{thm:controllable-hautus}, $\bar H_{11}=0$.
    Since $\bar H\geq0$, we have $\bar H_{12}=0$.
    By $\Lambda_{22}=\lambda\bar H_{22}$, we have 
    \begin{align*}
        \hA_{22}^{\top}\bar H_{22}\hA_{22}+\hC_{22}^{\top}\bar H_{22}\hC_{22}=\lambda\bar H_{22}.
    \end{align*}
    By \autoref{thm:stabilizable-decomposition}, system \eqref{eq:subsystem-2} is $L^2$-stabilizable.
    By \autoref{prop:stabilizable-B=0}, $\bar H_{22}=0$.
    Hence, $\bar H=0$, which is a contradiction.

    (ii) $\implies$ (i).
    If $\dim V_{n-1}=n$, then system \eqref{eq:canonical} is exactly null controllable by \autoref{thm:null-controllable}.
    By \autoref{lem:controllable-stabilizable}, system \eqref{eq:canonical} is $L^2$-stabilizable.
    If $\dim V_{n-1}=0$, then $\hB=0$.
    Let $H\in\bar\dbS_+^n\setminus\{0\}$ such that $\cL^*(H)=\lambda H$.
    By (ii), we have $\lambda>1$.
    Then system \eqref{eq:canonical} is $L^2$-stabilizable by \autoref{prop:stabilizable-B=0}.
    Now assume $0<k<n$, and let $P$ be the orthogonal matrix in \autoref{thm:controllability decomposition} satisfying \eqref{eq:decomposition} and \eqref{eq:decompostion-original-matrix}.
    We argue by contradiction.
    Suppose that system \eqref{eq:canonical} is not $L^2$-stabilizable.
    By \autoref{thm:stabilizable-decomposition}, the subsystem \eqref{eq:subsystem-2} is not $L^2$-stabilizable.
    By \autoref{prop:stabilizable-B=0}, there exists $(\lambda,H_{22})\in[0,1]\times\bar\dbS_+^{n-k}\setminus\{0\}$ such that $\hA_{22}^{\top}H_{22}\hA_{22}+\hC_{22}^{\top}H_{22}\hC_{22}=\lambda H_{22}$.
    Define $H\triangleq P\diag(0,H_{22})P^{\top}\in\bar\dbS_+^n\setminus\{0\}$.
    Then 
    \begin{align*}
        P^{\top}\cL^*(H)P=\diag(0,\hA_{22}^{\top}H_{22}\hA_{22}+\hC_{22}^{\top}H_{22}\hC_{22})=\lambda\diag(0,H_{22}).
    \end{align*}
    Hence, $\cL^*(H)=\lambda H$.
    On the other hand, 
    \begin{align*}
        \hB^{\top}H=\hB^{\top}PP^{\top}HPP^{\top}=(\hB_1^{\top},0)\diag(0,H_{22})P^{\top}=0,
    \end{align*}
    which contradicts (ii).
    The proof is done.
\end{proof}

\section{Conclusion}
\label{sec:conclusion}

This paper has investigated exact controllability and $L^2$-stabilizability of discrete-time backward-structured stochastic linear systems. 
For each prescribed finite horizon, equivalent characterizations of exact controllability have been established in terms of the controllability Gramian, a Kalman-type rank condition, and the reachable subspace. 
Without prescribing a control horizon, exact null controllability has been characterized by a finite-dimensional rank condition and a Hautus-type criterion based on the spectral properties of positive operators. 
For $L^2$-stabilizability, a stabilizability decomposition has been established, leading to a corresponding Hautus-type spectral criterion.
In addition, exact null controllability has been shown to imply $L^2$-stabilizability. 
These results extend classical algebraic and spectral tests to discrete-time backward-structured stochastic linear systems.

\section*{Disclosure statement}

No potential conflict of interest was reported by the author.

\section*{Funding}

This research received no specific grant from any funding agency in the public, commercial, or not-for-profit sectors.

\section*{Data availability statement}

Data sharing is not applicable to this article as no new data were created or analyzed in this study.


\end{document}